\colorlet{darkishRed}{red!80!black}
\colorlet{darkishBlue}{blue!60!black}
\colorlet{darkishGreen}{green!60!black}
\newcommand{\SCnumberCite}[1]
{%
\ifthenelse{\equal{#1}{1}}{\cite{StarComb1StarsAndCombs}}{}%
\ifthenelse{\equal{#1}{2}}{\cite{StarComb2TheDominatedComb}}{}%
\ifthenelse{\equal{#1}{3}}{\cite{StarComb3TheUndominatedComb}}{}%
\ifthenelse{\equal{#1}{4}}{\cite{StarComb4TheUndominatingStar}}{}%
}
\newcommand{\SCnumberHand}[2]
{%
\ifthenelse{\equal{#1}{#2}}{\ding{43}\,}{}%
}
\newcommand{\SCintroList}[2]
{%
    \ifthenelse{\equal{#1}{#2}}{(this paper)}{\SCnumberCite{#1}}%
}
\newcommand{\SCintroDetermined}[1]
{%
    \ifthenelse{\equal{#1}{1}}{In this paper, we determine}{In the first paper of this series, we determined}%
}
\newcounter{quotecount}
\newcommand*{\addFileDependency}[1]{
  \typeout{(#1)}
  \@addtofilelist{#1}
  \IfFileExists{#1}{}{\typeout{No file #1.}}
}
\newcommand{\Abs}[1]{\partial_{\Omega} {#1}}
\newcommand{\rest}{\upharpoonright}
\renewcommand{\subset}{\subseteq}
\def\TFAD{Let $G$ be any connected graph and let $U\subset V(G)$ be any vertex set. Then the following assertions are complementary:}
\def\tfad{the following assertions are complementary:}
\newcommand{\at}{attached to }
\newcommand{ \N } { \mathbb{N} }
\newcommand{\dblue}[1]{\textcolor{darkishBlue}{#1}}
\newcommand{\dc}[1]{\lceil #1\rceil}
\newcommand{\uc}[1]{\lfloor #1\rfloor}
\newcommand{\guc}[1]{\lfloor\mkern-1.4\thinmuskip\lfloor #1\rfloor\mkern-1.4\thinmuskip\rfloor}
\newcommand{\dt}{T_{\textup{\textsc{dec}}}}
\def\calCommandfactory#1{%
   \expandafter\def\csname c#1\endcsname{\mathcal{#1}}}
\def\frakCommandfactory#1{%
   \expandafter\def\csname frak#1\endcsname{\mathfrak{#1}}}
\newcounter{ctr}
  \edef\X{\@Alph\c@ctr}
  \edef\Y{\@alph\c@ctr}
\renewcommand{\cC}{\mathscr{C}}
\def\lowfwd #1#2#3{{\mathop{\kern0pt #1}\limits^{\kern#2pt\raise.#3ex
\vbox to 0pt{\hbox{$\scriptscriptstyle\rightarrow$}\vss}}}}
\def\lowbkwd #1#2#3{{\mathop{\kern0pt #1}\limits^{\kern#2pt\raise.#3ex
\vbox to 0pt{\hbox{$\scriptscriptstyle\leftarrow$}\vss}}}}
\def\fwd #1#2{{\lowfwd{#1}{#2}{15}}}
\def\Sinf{S_{\aleph_0}}
\def\vE{{\hskip-1pt{\fwd{E}{3.5}}\hskip-1pt}}
\def\ve{\kern-1.5pt\lowfwd e{1.5}2\kern-1pt}
\def\ev{\kern-1pt\lowbkwd e{0.5}2\kern-1pt}
\def\vf{\kern-2pt\lowfwd f{2.5}2\kern-1pt}
\newtheorem{theorem}{Theorem}[section] 
\newtheorem{corollary}[theorem]{Corollary}
\newtheorem{lemma}[theorem]{Lemma}
\newtheorem{mainresult}{Theorem} 
\newtheorem{maincorollary}[mainresult]{Corollary}
\newenvironment{customthm}[1]
  {\innercustomthm}
  {\endinnercustomthm}
\newenvironment{customcor}[1]
  {\innercustomcor}
  {\endinnercustomcor}
\theoremstyle{definition}
\newtheorem{example}[theorem]{Example}
\theoremstyle{remark}
\newcommand{\kn}{knobbly}
\begin{document}

\title[Duality theorems for stars and combs III: Undominated combs]{Duality theorems for stars and combs\\
III: Undominated combs}

\author{Carl Bürger}
\author{Jan Kurkofka}
\address{University of Hamburg, Department of Mathematics, Bundesstraße 55 (Geomatikum), 20146 Hamburg, Germany}
\email{carl.buerger@uni-hamburg.de, jan.kurkofka@uni-hamburg.de}

\keywords{infinite graph; star-comb lemma; undominated comb; duality; dual; complementary; normal tree; rayless spanning tree; tree-decomposition; star-decomposition; undominated ends; finitely separable}

\@namedef{subjclassname@2020}{\textup{2020} Mathematics Subject Classification}
\subjclass[2020]{05C63, 05C40, 05C75, 05C05} 

\begin{abstract}
In a series of four papers we determine structures whose existence is dual, in the sense of complementary, to the existence of stars or combs.
Here, in the third paper of the series, we present duality theorems for a combination of stars and combs: undominated combs.
We describe their complementary structures in terms of rayless trees and of tree-decompositions.

Applications include a complete characterisation, in terms of normal spanning trees, of the graphs whose rays are dominated but which have no rayless spanning tree.
Only two such graphs had so far been constructed, by Seymour and Thomas~\cite{endfaithfulCounterexample} and by Thomassen~\cite{ThomassenEndfaithfulCounterexample}.
As a corollary, we show that graphs with a normal spanning tree have a rayless spanning tree if and only if all their rays are dominated.
\end{abstract}
\vspace*{-1.14cm} 
\maketitle

\vspace*{-.75cm}

\section{Introduction}

\noindent Two properties of infinite graphs are \emph{complementary} in a class of infinite graphs if they partition the class.
In a series of four papers we determine structures whose existence is complementary to the existence of two substructures that are particularly fundamental to the study of connectedness in infinite graphs: stars and combs.
See~\cite{StarComb1StarsAndCombs} for a comprehensive introduction, and a brief overview of results, for the entire series of four papers (\cite{StarComb1StarsAndCombs,StarComb2TheDominatedComb,StarComb4TheUndominatingStar} and this paper).

In the first paper~\cite{StarComb1StarsAndCombs} of this series we found structures whose existence is complementary to the existence of a star or a comb attached to a given set $U$ of vertices, and two types of these structures turned out to be relevant for both stars and combs: normal trees and tree-decompositions.
A \emph{comb} is the union of a ray $R$ (the comb's \emph{spine}) with infinitely many disjoint finite paths, possibly trivial, that have precisely their first vertex on~$R$. 
The last vertices of those paths are the \emph{teeth} of this comb.
Given a vertex set $U$, a \emph{comb attached to} $U$ is a comb with all its teeth in $U$, and a \emph{star attached to} $U$ is a subdivided infinite star with all its leaves in $U$.
Then the set of teeth is the \emph{attachment set} of the comb, and the set of leaves is the \emph{attachment set} of the star.
Given a graph $G$, a rooted tree $T\subset G$ is \emph{normal} in $G$ if the endvertices of every $T$-path in $G$ are comparable in the tree-order of $T$, cf.~\cite{DiestelBook5}.
For the definition of tree-decompositions see~\cite{DiestelBook5}.

As stars and combs can interact with each other, this is not the end of the story.
For example, a given vertex set $U$ might be connected in a graph $G$ by both a star and a comb, even with infinitely intersecting sets of leaves and teeth. 
To formalise this, let us say that a subdivided star $S$ \emph{dominates} a comb $C$ if infinitely many of the leaves of $S$ are also teeth of $C$.
A \emph{dominating star} in a graph $G$ then is a subdivided star $S\subset G$ that dominates some comb $C\subset G$; and a \emph{dominated comb} in $G$ is a comb $C\subset G$ that is dominated by some subdivided star $S\subset G$.
Thus, a comb $C\subset G$ is undominated in $G$ if it is not dominated in $G$.
Recall that a vertex $v$ of $G$ \emph{dominates} a ray $R\subset G$ if there is an infinite $v$--$(R-v)$ fan in~$G$, see~\cite{DiestelBook5}. 
A ray $R\subset G$ is \emph{dominated} if some vertex of $G$ dominates it.
Rays not dominated by any vertex of $G$ are \emph{undominated}.
Dominated combs are related to dominated rays in that a comb is dominated in $G$ if and only if its spine is dominated in $G$.

In the second paper~\cite{StarComb2TheDominatedComb} of our series we determined structures whose existence is complementary to the existence of dominating stars or dominated combs---again in terms of normal trees or tree-decompositions.

Here, in the third paper of the series, we determine structures whose existence is complementary to the existence of undominated combs.
A candidate for a normal tree that is complementary to an undominated comb in $G$ attached to a given set $U$ of vertices is a normal tree $T\subset G$ that contains $U$ and all whose rays are dominated in~$G$, for if $U=V(G)$ then $T$ is spanning and hence its (dominated) rooted rays are in a natural one-to-one correspondence to the ends of $G$.
Such normal trees $T$ are easily seen to be complementary structures for undominated combs whenever $G$ happens to contain some normal tree that contains~$U$.
But in general, normal trees $T\subset G$ containing $U$ all whose rays are dominated in $G$ are not complementary to undominated combs, because the absence of an undominated comb does not imply the existence of such a normal tree:
for example if $G$ is an uncountable complete graph and $U=V(G)$, then every normal tree in $G$ containing $U$ must be spanning but $G$ does not have any normal spanning tree.

As our first main result, we show that if $U$ is contained in any normal tree $T\subset G$, there is a more elementary structure that is complementary to undominated combs \at $U$ and which obstructs undominated combs \at $U$ immediately: a rayless tree containing~$U$.
Call a set $U\subseteq V(G)$ of vertices of a graph $G$ \emph{normally spanned} in $G$ if $U$ is contained in a tree $T\subset G$ that is normal in $G$. 
The graph $G$ is \emph{normally spanned} if $V(G)$ is normally spanned in $G$, i.e., if $G$ has a normal spanning tree. 
\begin{customthm}{\ref{thm: undominated comb duality}}
Let $G$ be any graph and let $U \subseteq V(G)$ be normally spanned in $G$. Then \tfad 
\begin{enumerate}
    \item $G$ contains an \dblue{undominated comb} \at $U$;
    \item there is a \dblue{rayless tree} $T\subset G$ that contains $U$.
\end{enumerate}
\end{customthm}
\noindent This extends results of Polat~\cite{polatFrenchIII,Polat90} and Širáň~\cite{siran}, who proved the case  $U=V(G)$ for countable~$G$: 
\emph{A countable connected graph has a rayless spanning tree if and only if all its rays are dominated.}

There are uncountable graphs $G$ for which this duality fails, even for $U=V(G)$.
By Theorem~\ref{thm: undominated comb duality}, such graphs $G$ cannot have a normal spanning tree.
There are two known constructions of such graphs, by Seymour and Thomas~\cite{endfaithfulCounterexample} and by Thomassen~\cite{ThomassenEndfaithfulCounterexample}.
Both these constructions are involved.

As a corollary of Theorem~\ref{thm: undominated comb duality} we obtain a full characterisation of the graphs that contain a rayless tree containing a given set $U$ of vertices: they are precisely the graphs $G$ that have a subgraph $H$ in which $U$ is normally spanned and all whose rays are dominated in $H$.
In particular, we obtain the following corollary:

\begin{customcor}{\ref{cor: characterisation graphs with rayless STs}}
Graphs with a normal spanning tree have a rayless spanning tree if and only if all their rays are dominated.
\end{customcor}
\noindent The graphs with a normal spanning tree are well studied and are quite well known: see~\cite{DiestelLeaderNST,jung69,PitzNewNSTobstructions}.

While it is not always possible to find normal trees or rayless trees that are complementary to undominated combs, it turns out that suitable tree-decompositions still serve as complementary structures:
\begin{customthm}{\ref{thm: undominated comb johannes ii}}
\TFAD
\begin{enumerate}
    \item $G$ contains an \dblue{undominated comb} \at $U$;
    \item $G$ has a \dblue{star-decomposition} with finite adhesion sets such that $U$ is contained in the central part and all undominated ends of $G$ live in the leaves' parts.
\end{enumerate}
Moreover, we may assume that the adhesion sets of the tree-decomposition in~\emph{(ii)} are connected.
\end{customthm}

As discussed above, rayless trees are in general too strong to serve as complementary structures for undominated combs.
It turns out that less specific structures than rayless trees, subgraphs all of whose rays are dominated, yield another complementary structure for undominated combs:

\begin{customthm}{\ref{thm: undominated comb johannes}}
\TFAD
\begin{enumerate}
    \item $G$ contains an \dblue{undominated comb} \at $U$;
    \item $G$ has a connected \dblue{subgraph} that contains $U$ and all whose rays are dominated in~it.
\end{enumerate}
\end{customthm}

This paper is organised as follows. In Section~\ref{section: rayless tree}, we prove our duality theorem for undominated combs in terms of rayless trees, Theorem~\ref{thm: undominated comb duality}. In Section~\ref{section: applications rayless tree}, we discuss applications of this duality theorem. In Section~\ref{section: full duality theorems}, we provide our two full duality theorems for undominated combs: Theorem~\ref{thm: undominated comb johannes ii} and Theorem~\ref{thm: undominated comb johannes}.

Throughout this paper, $G=(V,E)$ is an arbitrary graph.
We use the graph theoretic notation of Diestel's book~\cite{DiestelBook5}, and we assume familiarity with the tools and terminology described in the first paper of this series~\cite[Section~2]{StarComb1StarsAndCombs}.


\section{Undominated combs and rayless trees}\label{section: rayless tree} 

\noindent In this section, we will consider rayless trees as structures that are complementary to undominated combs. 
As usual, let $G$ be any connected graph and let $U\subseteq V(G)$ be any vertex set.
There are three reasons why rayless trees containing $U$ are good candidates.
First, an undominated comb \at $U$ is more specific than a comb \at $U$ and in \cite[Theorem~1]{StarComb1StarsAndCombs} we proved that rayless normal trees $T\subseteq G$ that contain $U$ are complementary to combs.
Therefore, structures that are complementary to undominated combs should be less specific than such normal trees.

Second, by the star-comb lemma, $G$ containing no undominated comb \at $U$ can be rephrased as follows: for every infinite subset $U'\subset U$ the graph $G$ contains a star \at ~$U'$. 
So combining such stars in a clever way might lead to a rayless tree containing~$U$.

Finally, a graph cannot contain both an undominated comb \at $U$ and a rayless tree containing $U$ at the same time:\begin{lemma}[{\cite[Lemma~2.4]{StarComb1StarsAndCombs}}]\label{lemma:rayless_tree_contains_star_at_U}
If $U$ is an infinite set of vertices in a rayless rooted tree $T$, then $T$ contains a star \at $U$ which is contained in the up-closure of its central vertex in the tree-order of $T$.
\end{lemma}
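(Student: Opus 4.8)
The plan is to locate the central vertex of the desired star by a descending search through $T$ that starts at the root, and to invoke raylessness precisely to guarantee that this search terminates.

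First I would fix notation: for a vertex $v$ of $T$ let $\lceil v\rceil$ denote its up-closure in the tree-order, and set $U_v:=U\cap\lceil v\rceil$. Call $v$ \emph{branching} (for $U$) if infinitely many children $d$ of $v$ satisfy $U_d\neq\emptyset$. The one combinatorial observation needed is a dichotomy: if $U_v$ is infinite but $v$ is not branching, then $U_v\setminus\{v\}$ is the disjoint union of the sets $U_d$ over the children $d$ of $v$, only finitely many of which are nonempty; hence one of them, say $U_d$, is infinite, so $v$ has a child $d$ with $U_d$ infinite.

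Next I would run the following process. Let $c_0$ be the root, so $U_{c_0}=U$ is infinite. Given $c_i$ with $U_{c_i}$ infinite, stop if $c_i$ is branching; otherwise use the dichotomy to pick a child $c_{i+1}$ of $c_i$ with $U_{c_{i+1}}$ infinite, and continue. The crucial point — and the only place where raylessness is used — is that this process cannot run forever: an infinite run would yield a strictly increasing sequence $c_0<c_1<c_2<\cdots$ in the tree-order, and then the concatenation of the edges $c_ic_{i+1}$ would be a ray in $T$, contradicting that $T$ is rayless. So the process halts at some vertex $c:=c_k$ that is branching and has $U_c$ infinite.

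Finally I would assemble the star. Since $c$ is branching, choose infinitely many distinct children $d_1,d_2,\dots$ of $c$ together with vertices $u_j\in U_{d_j}$. The paths $cTu_j$ pairwise meet only in $c$, since the up-closures $\lceil d_j\rceil$ are pairwise disjoint and contained in $\lceil c\rceil$; the leaves $u_j$ are distinct and lie in $U$; and every vertex of $cTu_j$ is $\ge c$ in the tree-order. Hence $\bigcup_j cTu_j$ is a subdivided infinite star with centre $c$, attached to $U$, and contained in $\lceil c\rceil$. I expect the only genuine subtlety to be the termination step, namely verifying carefully that a never-ending descent traces out an honest ray; the remaining work is routine bookkeeping about tree-orders and up-closures.
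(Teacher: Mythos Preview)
Your argument is correct. Note, however, that the present paper does not actually contain a proof of this lemma: it is imported verbatim from the first paper of the series \cite[Lemma~2.4]{StarComb1StarsAndCombs}, so there is no in-paper proof to compare against. Your approach---a K\H{o}nig-type descent along vertices whose up-closure meets $U$ infinitely, which must terminate by raylessness at a vertex with infinitely many children whose up-closures meet $U$---is the standard one and is essentially what the cited proof does as well. One small notational point: in this paper (following Diestel) $\lceil v\rceil$ denotes the \emph{down}-closure and $\lfloor v\rfloor$ the \emph{up}-closure, so you may wish to swap your bracket convention to match.
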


For $U=V(G)$, Širáň~\cite{siran} conjectured that $G$ having a rayless spanning tree is complementary to $G$ containing an undominated comb \at $U$.
Surprisingly, his conjecture has turned out to be false, as shown by Seymour and Thomas~\cite{endfaithfulCounterexample}.
The counterexample they have found is also a big surprise.
Recall that $T_\kappa$ for a cardinal $\kappa$ denotes the tree all whose vertices have degree $\kappa$.
\begin{theorem}[{\cite[Theorem~1.6]{endfaithfulCounterexample}}]\label{raylessSpanningTreeCounterexample}
There is an infinitely connected,
in particular one-ended, graph $G$ of order $2^{\aleph_0}$ which does not contain a subdivided $K^{\aleph_1}$, such that every spanning tree of $G$ contains a subdivision of $T_{\aleph_1}$.
\end{theorem}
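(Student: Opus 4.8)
The plan is to build $G$ from a rigid ``skeleton'' that already carries $T_{\aleph_1}$ and then to thicken it just enough to become infinitely connected while staying ``tree\-like''. For the skeleton I take the $\aleph_1$\-branching tree of height $\omega$, realised as $T^\ast := \omega_1^{<\omega}$ with every node $t$ joined to each extension $t^\frown\alpha$ ($\alpha<\omega_1$); this is a copy of $T_{\aleph_1}$, it has $\aleph_1$ nodes and $\aleph_1^{\aleph_0} = 2^{\aleph_0}$ branches. To each branch $c\in\omega_1^\omega$ I attach a \emph{dominating gadget}: a new vertex $y_c$ together with infinitely many internally disjoint paths from $y_c$ to vertices of the ray $c\restriction 0, c\restriction 1,\dots$, chosen pairwise disjoint and meeting $T^\ast$ only in their last vertices, so that $y_c$ dominates that ray. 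Crucially these paths must be \emph{long} and ``sparse in depth'', not single edges down to $c\restriction 0$; the reason appears below. This keeps $|V(G)| = 2^{\aleph_0}$, and the whole picture already contains the subdivision\-free copy $T^\ast\cong T_{\aleph_1}$. Finally I add a (carefully thinned) \emph{connectivity gadget} of further internally disjoint paths making $G$ infinitely connected; being infinitely connected, $G$ then has exactly one end, which, since every ray will turn out dominated, is a dominated end.

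The absence of a subdivided $K^{\aleph_1}$ I would prove by a cone argument. Suppose $H\subseteq G$ subdivides $K^{\aleph_1}$ with branch set $B$. Each branch vertex is an endpoint of $\aleph_1$ of the internally disjoint subdivision paths, hence has degree $\ge\aleph_1$ in $G$; since every $y_c$, every internal gadget vertex, and (by the thinning) all but countably many crossings at each cone boundary have countable degree or are used by at most one subdivision path, we may discard those countably many paths and are left with $B\subseteq\omega_1^{<\omega}$, and, as $\omega$ is countable while $|B|=\aleph_1$, with $\aleph_1$ branch vertices $b_\xi$ lying on a single level $n$ of $T^\ast$. Any two such $b_\xi$ are incomparable, and by inspecting how a path can leave the cone above a node of $T^\ast$ --- it can only exit through a common ancestor of the base node with the target, since the gadget paths run along single branches --- every surviving subdivision path between $b_{\xi_0}$ and $b_\eta$ passes through a common ancestor of $b_{\xi_0}$ and $b_\eta$, hence through one of the $n$ proper ancestors of $b_{\xi_0}$, as an internal vertex. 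But $\aleph_1$ internally disjoint paths cannot all pass through one of $n$ fixed internal vertices; contradiction.

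For the clause that \emph{every} spanning tree $S$ of $G$ subdivides $T_{\aleph_1}$, note first that, fixing a root, every vertex of $S$ lies at finite distance from it, so $S$ has countably many levels; hence some vertex of $S$ has $\ge\aleph_1$ children, as otherwise $S$ would be countable whereas $|V(S)|=2^{\aleph_0}$. The plan is to upgrade this into a meet\-preserving embedding $\sigma\mapsto v_\sigma$ of $\omega_1^{<\omega}$ into $(V(S),\le_S)$ landing in the set of vertices of $S$ with $\aleph_1$ children, built by recursion on $|\sigma|$ so that each $v_{\sigma^\frown\alpha}$ lies above $v_\sigma$ in a new $S$\-direction; the union of the $S$\-paths from $v_\sigma$ to $v_{\sigma^\frown\alpha}$ is then a subdivision of $T_{\aleph_1}$. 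The recursion step needs that inside the $S$\-subtree above the vertex reached so far one can again find $\aleph_1$ pairwise $S$\-incomparable vertices each with that same abundance property, i.e.\ that no finite frontier of $S$ cuts off all but countably many of them. This is exactly where the gadgets must be engineered so as never to create a ``short'' spanning tree: if $y_c$ were joined to all of $c\restriction 0,c\restriction 1,\dots$ by single edges, the root of $T^\ast$ would see every vertex within three steps and $G$ would have a spanning tree of height $3$, hence none subdividing $T_{\aleph_1}$. I would formalise the needed self\-similarity as follows: for each node $t$ of $T^\ast$ the cones above the children of $t$ are separated in $G$ only by an \emph{infinite} set (finite separators being excluded by infinite connectivity), yet no spanning tree can attach $\aleph_1$ of these cones to the rest of $G$ without branching $\aleph_1$ times above a single vertex; iterating down the cones of $T^\ast$ produces the embedding.

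The main obstacle is reconciling the three requirements that pull against one another: infinite connectivity forbids finite separators anywhere, the absence of a subdivided $K^{\aleph_1}$ forbids $\aleph_1$ disjoint paths between any two vertex sets of size $\aleph_1$, and the spanning\-tree clause forbids any gadget edge that would let some spanning tree collapse the $\aleph_1$\-branching of the skeleton. Constructing systems of disjoint paths that are simultaneously infinitely connecting, countably thin at every cone boundary, and self\-similar is the technical heart of the matter, and is why the construction of Seymour and Thomas is intricate.
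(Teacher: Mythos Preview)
The paper does not prove this theorem at all: it is quoted verbatim from Seymour and Thomas \cite[Theorem~1.6]{endfaithfulCounterexample} and used as a black box. The surrounding text even remarks that ``both these constructions are involved'', precisely because no short argument is known. So there is nothing to compare your proposal against in this paper.

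As for the proposal itself, it is a plan, not a proof, and you say as much in the final paragraph. The two load-bearing steps are both missing. First, the ``connectivity gadget'' is never specified; yet as soon as you add enough paths to make $G$ infinitely connected, your cone argument for the absence of a subdivided $K^{\aleph_1}$ is in danger: paths between incomparable nodes of $T^\ast$ need no longer route through a common ancestor once gadget paths link different branches, and ``thinning'' so that only countably many crossings occur at each cone boundary is exactly what infinite connectivity tends to forbid. You would have to exhibit a concrete family of paths that is simultaneously rich enough for infinite connectivity and sparse enough to leave the cone structure intact, and this is the hard part of the Seymour--Thomas construction. Second, for the spanning-tree clause your recursion needs that above \emph{every} vertex already reached, the $S$-subtree again contains $\aleph_1$ incomparable vertices each with $\aleph_1$ children; you acknowledge this but give no mechanism that guarantees it. The observation that a height-$3$ spanning tree would arise if the dominating edges were too short is a good warning sign, but avoiding that one bad tree does not establish the self-similarity you need for all spanning trees.

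In short: correct identification of the tensions, but no construction and no proof of either nontrivial clause.
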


\noindent Indeed, the end of a graph $G$ as in Theorem~\ref{raylessSpanningTreeCounterexample} is dominated as $G$ is infinitely connected, but for $U=V(G)$ the graph does not contain a rayless tree containing~$U$.

A similar counterexample has been obtained independently by Thomassen~\cite{ThomassenEndfaithfulCounterexample}.
Set-theoretic points of view are presented in both \cite{endfaithfulCounterexample} and Kom\-játh's~\cite{KomjathEndfaithfulMartin}. 
Komjáth even gives a positive consistency result under Martin's axiom for graphs $G$ with $<2^{\aleph_0}$ many vertices: \emph{If $\kappa<2^{\aleph_0}$ is a cardinal, MA$(\kappa)$ holds, and $G$ is infinitely connected with $\vert V(G)\vert\le\kappa$, then $G$ has a rayless spanning tree.}

Nevertheless, it is known that requiring $G$ to be countable does suffice to ensure the existence of a rayless spanning tree when $G$ is connected and every end is dominated, giving the following duality:
\begin{theorem}\label{thm:Siran}
Let $G$ be any connected countable graph. Then \tfad
\begin{enumerate}
    \item $G$ contains an undominated comb \at $V(G)$;
    \item $G$ has a rayless spanning tree.
\end{enumerate}
\end{theorem}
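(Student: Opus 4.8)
The two conditions plainly exclude one another, and I would settle that first. Suppose $T$ is a rayless spanning tree of $G$ and, for contradiction, that $R\subseteq G$ is an undominated ray. Rooting $T$ arbitrarily and applying Lemma~\ref{lemma:rayless_tree_contains_star_at_U} to the infinite vertex set $V(R)$ produces a subdivided star $S\subseteq T\subseteq G$ infinitely many of whose leaves lie on $R$; its centre is joined to infinitely many distinct vertices of $R$ by independent paths and hence dominates $R$---which is absurd. Since a comb is dominated if and only if its spine is, and the spine of every comb attached to $V(G)$ is a ray of $G$, it follows that (ii) implies the failure of (i), so the two cannot hold together.

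For the converse I would argue by contraposition and first reduce from combs to rays: as every ray of $G$ is a comb attached to $V(G)$ (take all its teeth trivial) and a comb is dominated precisely when its spine is, it suffices to find an \emph{undominated ray} in a countable connected graph that has no rayless spanning tree. The natural attempt starts from a normal spanning tree $T$ of $G$, which exists by Jung's theorem~\cite{jung69} since $G$ is countable and connected. Call a vertex $t$ \emph{bad} if the subgraph $G_t:=G[\lfloor t\rfloor]$ that $G$ induces on the up-closure $\lfloor t\rfloor$ of $t$ in $T$ has no rayless spanning tree. A bad vertex has a bad child: otherwise one assembles a rayless spanning tree of $G_t$ from rayless spanning trees of the $G_c$ with $c$ a child of $t$, joined to $t$ by the tree-edges $tc$---this uses that distinct $\lfloor c\rfloor$ are pairwise non-adjacent, which is immediate from normality. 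Following bad children from the root then yields a normal ray $R=t_0t_1t_2\dots$ all of whose vertices $t_i$ are bad, and one hopes $R$ is undominated. Normality supplies a partial handle: a vertex dominating a normal ray must itself lie on the ray, because by normality any fan from a vertex $v$ off the ray to a cofinal set of its vertices has all of its paths passing through the finitely many vertices of the ray below $v$, contradicting pigeonhole against internal disjointness. Thus if $R$ were dominated, some $t_m\in V(R)$ would dominate it.

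The hard part---and the reason the statement is genuinely subtle, being false for uncountable $G$ by Theorem~\ref{raylessSpanningTreeCounterexample}---is that this does not finish the proof: a chord from $t_m$, which sits outside $\lfloor t_{m+1}\rfloor$, to the tail of $R$ need not yield any vertex of $\lfloor t_{m+1}\rfloor$ dominating that tail, so $G_{t_{m+1}}$ can genuinely lack a rayless spanning tree even though the tail of $R$ stays dominated in $G$; the crude tree recursion therefore does not close. Carrying the argument through requires the more careful transfinite construction of Polat~\cite{Polat90,polatFrenchIII} and Širáň~\cite{siran}, who build the rayless spanning tree directly, controlling Schmidt's rank of the rayless pieces and using countability to keep the separators arising in the recursion finite; faithfully recovering that construction is the step I would expect to cost the most work. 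In the context of this paper, though, the quick route---the one I would actually take---is to observe that Theorem~\ref{thm:Siran} is exactly the case $U=V(G)$ of Theorem~\ref{thm: undominated comb duality}: a countable connected graph is normally spanned, and a rayless tree $T\subseteq G$ with $V(T)=V(G)$ is nothing but a rayless spanning tree, so the dichotomy drops out as soon as Theorem~\ref{thm: undominated comb duality} is in hand.
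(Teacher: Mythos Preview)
Your final route is correct and is exactly what the paper provides: it states Theorem~\ref{thm:Siran} as a known result due to Polat and \v{S}ir\'a\v{n} and then proves the strictly more general Theorem~\ref{thm: undominated comb duality} independently, so the countable case drops out as the special instance $U=V(G)$ once one notes that every countable connected graph is normally spanned. Your exclusion argument via Lemma~\ref{lemma:rayless_tree_contains_star_at_U} is also the paper's; the intervening ``bad vertices'' discussion is an honest heuristic sketch but is neither needed nor completed, and you rightly abandon it.
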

\noindent Proofs are due to Polat~\cite{polatFrenchIII,Polat90} and Širáň~\cite{siran}.
Our main result in this section extends Theorem~\ref{thm:Siran}:

\begin{mainresult}\label{thm: undominated comb duality}
Let $G$ be any graph and let $U \subseteq V(G)$ be normally spanned in $G$. Then \tfad 
\begin{enumerate}
    \item $G$ contains an undominated comb \at $U$;
    \item there is a rayless tree $T\subset G$ that contains $U$.
\end{enumerate}
\end{mainresult}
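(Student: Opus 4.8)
\emph{The easy direction first.} To see that \emph{(i)} and \emph{(ii)} exclude one another --- this needs no hypothesis on $U$ --- I would argue as follows. Suppose $C\subseteq G$ were an undominated comb \at $U$ and $T\subseteq G$ a rayless tree containing $U$. The teeth of $C$ form an infinite subset $W\subseteq U\subseteq V(T)$, so Lemma~\ref{lemma:rayless_tree_contains_star_at_U} hands us a star \at $W$ inside $T$; as a subdivided star in $G$ all of whose leaves are teeth of $C$, it dominates $C$, so $C$ is a dominated comb and its spine is dominated --- a contradiction.

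\emph{The converse.} Assume $G$ contains no undominated comb \at $U$; I want a rayless tree $T\subseteq G$ with $U\subseteq V(T)$. First pass to the component of $G$ containing $U$ (they all lie in one, since a normal tree containing $U$ is connected), so assume $G$ is connected, and fix a normal tree $T_0\subseteq G$ with $U\subseteq V(T_0)$, rooted at~$r$. Replacing $T_0$ by the down-closure of $U$ in its tree-order, I may assume every vertex of $T_0$ lies below some vertex of $U$. The first observation is then that \emph{every ray of $T_0$ is dominated in $G$}. Since rays in one end are dominated together, it is enough to treat, in each end of $T_0$, the ascending ray $R=v_0v_1\cdots$ from the root, and I claim such an $R$ is the spine of a comb \at $U$: for infinitely many $i$ there is $u_i\in U$ above $v_i$ whose ancestor path from $r$ misses $v_{i+1}$ (otherwise, eventually, every $u\in U$ above $v_i$ would also lie above every later $v_j$, which is impossible for a single vertex of finite height while some $u\in U$ does lie above $v_j$). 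Along these indices the $v_i$--$u_i$ paths in $T_0$ are pairwise disjoint and meet $R$ only in $v_i$, so together with $R$ they form a comb \at $U$. By assumption this comb, hence $R$, is dominated in $G$.

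\emph{Turning $T_0$ into a rayless tree --- the main step.} Here I would use that every vertex of a rooted tree has finite height, so $U$ is a countable union of level sets $U_n$ of $T_0$, each of which is \emph{dispersed} in $G$ by the normality of $T_0$ (essentially the easy half of Jung's normal-spanning-tree criterion: any ray can be cut off from $U_n$ by a finite separator). The plan is to build $T$ as an increasing union of rayless trees in $G$, absorbing the $U_n$ one at a time through finite paths, while using the domination from the previous step to cap off each branch of $T_0$ before it is ever completed into a ray --- diverting the tree through a vertex dominating that branch, so that the branch contributes only finitely to $T$. I expect the real difficulty to be the limit stages: an increasing union of rayless trees can perfectly well contain a ray, so the recursion cannot just take unions but must be scheduled --- and this is where the dispersedness of the $U_n$ does the work --- so that along each branch of $T_0$ only finitely much rerouting ever piles up, keeping the final union rayless. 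Everything else (the reduction, that $T$ contains $U$, the bookkeeping with finite paths) should be routine once this scheduling is in place; the domination supplied by Step~1 and the dispersed decomposition of $U$ are the two ingredients that make it go through, and pinning down their interaction at limit stages is the crux of the proof.
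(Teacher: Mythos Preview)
Your easy direction is fine and matches the paper. Your first step in the converse---showing that every normal ray of $T_0$ is dominated in $G$ by exhibiting it as the spine of a comb attached to $U$---is correct and is indeed the right opening move.

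The gap is in your ``main step''. You have domination of each normal ray of $T_0$ by \emph{some} vertex of $G$, and a dispersed layering of $U$; from these you propose a recursive construction whose details you defer, conceding that the limit-stage scheduling is the crux. But there is a concrete obstruction you have not confronted: the vertices dominating the rays of $T_0$ need not lie in $T_0$, nor in any normally spanned enlargement of $U$. The paper's Example (the binary tree with tops) shows exactly this: with $U=V(T_2)$ every end is dominated, but the dominating tops together with $U$ are not normally spanned. Your ``divert through a dominating vertex'' idea implicitly assumes you can pull these external vertices into the construction without destroying the dispersedness bookkeeping that is supposed to prevent rays at limits; nothing in your sketch says why that is possible.

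The paper does not attempt the direct scheduling you describe. Instead it proves a sharper lemma (your breadth-first idea made precise): if every end in the closure of $U$ is dominated \emph{by a vertex of $U$}, then a carefully well-ordered greedy construction---always attaching the next missing vertex of $U$ by a path landing at minimal height in the tree built so far---yields a rayless tree. The raylessness argument genuinely uses that the dominating vertex is already in the tree. To reduce to this situation when dominating vertices lie outside $T_0$, the paper passes to a contraction minor: for each normal ray $R$ it fixes a dominating vertex $v_R$ and, when $v_R\notin T_0$, a short path $P_R$ from $v_R$ to $T_0$; contracting the resulting subdivided stars produces a minor $H$ in which $[T_0]$ is still normal (it is properly passed on), $[U]$ is still normally spanned, and---this is the technical heart---every normal ray of $[T_0]$ is now dominated in $H$ by a vertex of $[T_0]$. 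The breadth-first lemma then applies in $H$, and the rayless tree lifts back to $G$ because the contracted branch sets are finite stars. Your proposal is missing both the precise breadth-first construction with its minimum-height rule and the contraction-minor reduction that makes its hypothesis available; the ``scheduling'' you gesture at would have to reinvent both.
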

\noindent Note that this extends Theorem~\ref{thm:Siran} twofold: 
On the one hand, we localise the statement to an arbitrary vertex set $U\subseteq V(G)$. 
On the other hand, we extend the statement to the class of all graphs in which $U$ is normally spanned.

While our focus in this paper is to find duality theorems for undominated combs, Polat and Širáň were rather interested in a characterisation of those graphs that have rayless spanning trees. 
The strongest sufficient condition for the existence of a rayless spanning tree, other than Theorem~\ref{thm: undominated comb duality} (to the knowledge of the authors), is due to Polat~\cite{Polat1997}: \emph{If every end of a connected graph $G$ is dominated and $G$ contains no subdivided $T_{\aleph_1}$, then $G$ has a rayless spanning tree.}
His result does not imply our Theorem~\ref{thm: undominated comb duality}, for example consider $G$ to be the graph obtained from $T_{\aleph_1}$ by completely joining an arbitrarily chosen root to all other nodes, and $U=V(G)$. However, as a corollary of Theorem~\ref{thm: undominated comb duality}, we obtain a full characterisation of the graphs that have rayless spanning trees. Our characterisation even takes an arbitrary vertex set $U\subseteq V(G)$ into account:
\begin{corollary}
Let $G$ be any graph. Then the following assertions are equivalent:
\begin{enumerate}
    \item There is a rayless tree $T\subset G$ that contains $U$;
    \item $G$ has a subgraph $H$ in which  $U\subseteq V(H)$ is normally spanned and all whose rays are dominated in $H$.\qed
\end{enumerate}
\end{corollary}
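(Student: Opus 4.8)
The plan is to derive this corollary directly from Theorem~\ref{thm: undominated comb duality}; no further combinatorics is needed, and both implications are short.

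For the implication \emph{(i)}$\,\Rightarrow\,$\emph{(ii)} I would simply take $H:=T$. A tree is its own normal spanning tree (root $T$ arbitrarily; every $T$-path in $T$ is a single edge, whose endvertices are comparable in the tree-order), so $U\subseteq V(H)$ is normally spanned in $H$; and since $H=T$ is rayless it has no rays at all, so the condition ``all rays of $H$ are dominated in $H$'' holds vacuously. Thus \emph{(ii)} follows.

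For \emph{(ii)}$\,\Rightarrow\,$\emph{(i)} I would first check that $H$ contains no undominated comb attached to $U$: the spine of any comb $C\subseteq H$ is a ray of $H$ and hence dominated in $H$ by hypothesis, and since a comb is dominated in a graph exactly when its spine is (as recalled in the introduction), $C$ is dominated in $H$, i.e.\ not undominated in $H$. Now I would apply Theorem~\ref{thm: undominated comb duality} to the graph $H$ and the vertex set $U$, which is normally spanned in $H$: as assertion~(i) of that theorem fails for $H$, its assertion~(ii) must hold, yielding a rayless tree $T\subset H$ that contains $U$. Because $H\subseteq G$, this $T$ is the required rayless tree of $G$, and \emph{(i)} follows.

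I do not expect any genuine obstacle here: all the content sits in Theorem~\ref{thm: undominated comb duality}. The only points that need care are the routine observations that a tree is normal in itself and that being a dominated comb can be read off the spine, and the (minor) check that the version of Theorem~\ref{thm: undominated comb duality} invoked applies to arbitrary, not necessarily connected, graphs---which it does, since its sole hypothesis is that $U$ be normally spanned.
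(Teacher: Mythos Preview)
Your argument is correct and is precisely the derivation the paper intends: the corollary is stated with a bare \qed\ immediately after Theorem~\ref{thm: undominated comb duality}, indicating exactly this two-line deduction (take $H:=T$ for one direction, and apply Theorem~\ref{thm: undominated comb duality} inside $H$ for the other).
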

\noindent If the graph $G$ itself has a normal spanning tree, then our characterisation simplifies as follows:

\begin{maincorollary}\label{cor: characterisation graphs with rayless STs}
Graphs with a normal spanning tree have a rayless spanning tree if and only if all their rays are dominated.\qed
\end{maincorollary}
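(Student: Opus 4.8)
The plan is to obtain Corollary~\ref{cor: characterisation graphs with rayless STs} as an immediate specialisation of Theorem~\ref{thm: undominated comb duality} to the case $U=V(G)$. First I would note that a graph $G$ with a normal spanning tree is connected and has $V(G)$ normally spanned in $G$, so Theorem~\ref{thm: undominated comb duality} is applicable with $U=V(G)$. For this choice of $U$, any rayless tree $T\subset G$ containing $U$ satisfies $V(T)=V(G)$ and is therefore precisely a rayless spanning tree of $G$; thus assertion~(ii) of the theorem reads ``$G$ has a rayless spanning tree''. Since assertions (i) and (ii) are complementary, it follows that $G$ has a rayless spanning tree if and only if $G$ contains no undominated comb attached to $V(G)$.

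It then remains to check that, for an arbitrary graph $G$, the absence of an undominated comb attached to $V(G)$ is equivalent to all rays of $G$ being dominated. If $G$ has an undominated ray $R$, then $R$ itself --- regarded as a comb with spine $R$ all of whose incident finite paths are trivial, so that its teeth are exactly the vertices of $R$ --- is a comb attached to $V(G)$, and it is undominated because its spine $R$ is; conversely, if $G$ contains an undominated comb attached to $V(G)$, then its spine is an undominated ray of $G$, since a comb is dominated in $G$ exactly when its spine is, as recalled in the introduction. Feeding this equivalence into the previous paragraph yields the corollary.

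I do not expect any genuine obstacle here: the argument is essentially an unravelling of definitions around Theorem~\ref{thm: undominated comb duality}. The only points deserving a moment of care are the bookkeeping observation that ``rayless tree containing $V(G)$'' is the same thing as ``rayless spanning tree'', the elementary fact that a bare ray counts as a comb attached to its own vertex set, and the (trivial) remark that a graph with a normal spanning tree is connected, so that the theorem may be invoked in the first place.
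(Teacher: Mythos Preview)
Your proposal is correct and is exactly the derivation the paper has in mind: the corollary is stated with an immediate \qed as a direct specialisation of Theorem~\ref{thm: undominated comb duality} with $U=V(G)$, and the equivalence you spell out between ``no undominated comb attached to $V(G)$'' and ``every ray is dominated'' is precisely the routine unpacking of definitions the paper leaves implicit.
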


This section is organised as follows. In Section~\ref{section: a first approximation} we will prove Theorem~\ref{thm: undominated comb duality} for normally spanned graphs. 
Then, in Section~\ref{section: proof of theorem 1}, we will deduce Theorem~\ref{thm: undominated comb duality}. 
\subsection{Proof for normally spanned graphs}\label{section: a first approximation}
As a first approximation to Theorem~\ref{thm: undominated comb duality} we prove the following: 
\begin{theorem}\label{thm: undominated comb duality approximation}
Let $G$ be any normally spanned graph and let $U\subseteq V(G)$ be any vertex set. Then \tfad
\begin{enumerate}
    \item $G$ contains an undominated comb \at $U$;
    \item $G$ contains a rayless tree that contains $U$.
\end{enumerate}
\end{theorem}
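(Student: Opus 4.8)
The two assertions are mutually exclusive by Lemma~\ref{lemma:rayless_tree_contains_star_at_U}: if $T\subset G$ is a rayless tree containing an infinite $U$, then $T$ contains a star attached to $U$ inside the up-closure of its centre, and such a star cannot dominate a comb (all the relevant paths pass through the single centre), so (i) and (ii) cannot both hold; if $U$ is finite, any finite subtree connecting $U$ witnesses (ii) and no comb is attached to $U$ at all. So the real content is that the failure of (i) implies (ii).

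The plan is to fix a normal spanning tree $T_0$ of $G$ (available since $G$ is normally spanned) and to exploit the interplay between the tree-order of $T_0$, the ends of $G$, and domination. Recall the standard fact that a normal spanning tree $T_0$ captures the ends of $G$: every end contains exactly one normal ray of $T_0$, and an end $\omega$ is dominated by a vertex $v$ if and only if $v$ lies on (or below, in the tree-order, a cofinal segment of) the normal ray of $\omega$; in particular $\omega$ is dominated iff its normal ray has a vertex below which... more precisely, $\omega$ is dominated iff its normal ray is ``finitely generated'' from below, i.e.\ iff the normal ray has an upper bound in $V(G)$? No --- the correct characterisation is: $\omega$ is dominated iff every (equivalently, some) vertex set $\{v : v \le r_i \text{ for some } i\}$ along the normal ray $r_1 r_2 \dots$ has the property that some single vertex is adjacent to infinitely many $r_i$; dominating vertices of a normal ray lie on the ray or are ``$\le$'' comparable to cofinally many of its vertices. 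I would first record the precise version of this I need as a preliminary observation. Then I would show: the hypothesis ``no undominated comb attached to $U$'' together with the star-comb lemma (as noted in the excerpt: for every infinite $U' \subseteq U$, $G$ contains a star attached to $U'$) forces every normal ray of $T_0$ that has infinitely many vertices ``relevant to $U$'' --- say, lying in the down-closure-meeting-$U$ sense --- to be dominated. The key reformulation: if some normal ray $R$ of $T_0$ had a tail $R'$ such that $\lceil R' \rceil$ (its down-closure) were disjoint from $U$ cofinally, fine; otherwise, using that $T_0$ is normal, a comb attached to $U$ with spine following $R$ can be built, and by the star-comb / domination dichotomy applied to $U$ it must be dominated, hence $R$ is dominated.

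From here the construction of the rayless tree: having arranged that every normal ray of $T_0$ that ``reaches into $U$ cofinally'' is dominated in $G$, I would build $T$ by taking the part of $T_0$ spanned by $U$ --- that is, the union of all $T_0$-paths between vertices of $U$, equivalently the down-closure $\lceil U \rceil_{T_0}$ together with these paths --- and then, for each normal ray $R$ of $T_0$ inside this part, using a dominating vertex $v_R$ of $R$ to ``short-circuit'' the ray: add the edges/paths of a $v_R$--$R$ fan (an infinite fan exists since $v_R$ dominates the end of $R$) and delete a tail of $R$, so that the ray is replaced by a rayless fan-like gadget hanging off $v_R$. One must do this simultaneously and coherently for the (possibly many, possibly nested) normal rays so that (a) the result is still a tree, (b) it is rayless, and (c) it still contains $U$. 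Coherence is handled by processing rays in a well-ordered fashion along the tree-order of $T_0$, or by an application of the transfinite structure of $T_0$ (its levels), deleting ray-tails before adding the compensating fans and checking no new ray is created because every newly added path is finite and attaches to a vertex $v_R$ that sits below the deleted tail.

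The main obstacle I expect is exactly step (c)/(b) done uniformly: ensuring that after replacing \emph{all} normal rays by finite fan-gadgets the resulting graph is simultaneously connected, acyclic, contains $U$, and --- crucially --- rayless, since a priori the union of infinitely many short-circuiting operations, or an infinite ascending chain of dominating vertices $v_{R_1} \le v_{R_2} \le \cdots$, could conspire to produce a new ray. Controlling this presumably requires choosing the dominating vertices and the fans carefully (e.g.\ choosing $v_R$ of minimal height in $T_0$, or exploiting that a dominating vertex of a normal ray can be taken on the ray itself), and a transfinite-induction bookkeeping argument over the height of $T_0$; I would expect this to be where the bulk of the technical work in the authors' proof lies, and I would structure my write-up so that the ``every relevant normal ray is dominated'' reduction is a clean lemma and the surgery is a second, self-contained lemma.
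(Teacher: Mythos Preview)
Your overall strategy is genuinely different from the paper's, and the difficulty you anticipate is precisely the one the authors' method is designed to avoid.

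The paper does not perform surgery on a normal spanning tree. Instead it proceeds in two short steps. First (Lemma~\ref{lemma: Abs U plus dominating vts equals Abs U}), it enlarges $U$ to a set $\hat U$ containing, for every end in $\partial_\Omega U$, a vertex dominating that end; one checks $\partial_\Omega \hat U=\partial_\Omega U$, so $\hat U$ is still normally spanned (here one uses that $G$ itself is normally spanned) and every end in its closure is dominated \emph{by a vertex of~$\hat U$ itself}. Second (Lemma~\ref{lemma: breadth first search tree}), it builds the rayless tree from scratch by a \emph{breadth-first} process: well-order $\hat U$ so that all proper initial segments are dispersed (possible by Jung's Theorem~\ref{thm: NT and dispersed sets}), and at each successor step attach the next vertex of $\hat U$ by a path whose endpoint in the tree built so far has \emph{minimal height}. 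Raylessness is proved by contradiction: any ray $R$ in the resulting tree has its end in $\partial_\Omega \hat U$, hence is dominated by some $u^\ast\in\hat U$ already present at some stage; the fan from $u^\ast$ to $R$ then furnishes a shortcut showing that some later attaching path was not chosen with minimal-height endpoint.

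Your depth-first-then-short-circuit plan may be completable, but the coherence problem you flag---infinitely many, possibly nested normal rays to excise; new rays potentially created by the fans; vertices of $U$ possibly lost with the deleted tails---is real and is exactly what the BFS construction sidesteps: by always attaching as low as possible, rays never arise in the first place, so there is nothing to excise. The trade-off is that the BFS tree is no longer normal, but normality is not required for~(ii). Your prediction that ``this is where the bulk of the technical work in the authors' proof lies'' is therefore off the mark: the authors never face that bookkeeping at all.

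Two minor points. Your mutual-exclusivity argument is stated backwards: the star found inside the rayless tree \emph{does} dominate any comb attached to the same set, and that is what contradicts the comb being undominated. And the digression attempting to characterise when a vertex dominates a normal ray is both muddled and unnecessary; neither approach needs it.
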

Our proof consists of three key ideas, organised in three lemmas: Lemma~\ref{lemma: closure U equals closure T}, Lemma~\ref{lemma: Abs U plus dominating vts equals Abs U} and  Lemma~\ref{lemma: breadth first search tree}. 

\begin{lemma}[{\cite[Lemma~2.13]{StarComb1StarsAndCombs}}]\label{lemma: closure U equals closure T}
Let $G$ be any graph. If $T\subset G$ is a rooted tree that contains a vertex set~$W$ cofinally, then $\Abs{T}=\Abs{W}$.
\end{lemma}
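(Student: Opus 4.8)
The statement to prove is: if $T\subset G$ is a rooted tree containing a vertex set $W$ cofinally, then $\Abs{T}=\Abs{W}$. Here $\Abs{\,\cdot\,}$ denotes the set of ends of $G$ in the closure of a vertex set (in the $\Omega$-topology on $G$ together with its ends), and ``cofinally'' means that every vertex of $T$ lies below some vertex of $W$ in the tree-order of $T$. Since $W\subseteq V(T)$, the inclusion $\Abs{W}\subseteq\Abs{T}$ is immediate from monotonicity of closure, so the real content is $\Abs{T}\subseteq\Abs{W}$.

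The plan is as follows. Fix an end $\omega\in\Abs{T}$; I must show $\omega\in\Abs{W}$, i.e.\ that $\omega$ lies in the closure of $W$. Unravelling the definition of the $\Omega$-topology, this means: for every finite vertex set $S\subseteq V(G)$, the component $C(S,\omega)$ of $G-S$ containing (a tail of) $\omega$ also contains a vertex of $W$. So fix such an $S$. Since $\omega\in\Abs{T}$, the component $C(S,\omega)$ already contains infinitely many vertices of $T$; pick one, say $t\in V(T)\cap C(S,\omega)$. Because $W$ is cofinal in $T$, there is a vertex $w\in W$ with $t\le w$ in the tree-order, so the unique path $tT w$ in $T$ runs from $t$ up to $w$. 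Now either this path avoids $S$ entirely --- in which case $w$ lies in the same component $C(S,\omega)$ and we are done --- or it meets $S$. To rule out the bad case I will instead choose $t$ more carefully: since $C(S,\omega)\cap V(T)$ is infinite, and $T$ is a tree, I can pick $t$ to be ``deep enough'' in $T$, specifically a vertex of $T$ in $C(S,\omega)$ whose up-closure $\lceil t\rceil_T$ in $T$ avoids the finite set $S$; this is possible because only finitely many vertices of $T$ lie in $S$, hence only finitely many branches of $T$ are blocked, and $C(S,\omega)\cap V(T)$ being infinite must meet an unblocked branch. Then the cofinal $w\ge t$ satisfies $w\in\lceil t\rceil_T$ and the path $tTw\subseteq\lceil t\rceil_T$ avoids $S$, so $w\in C(S,\omega)\cap W$, as required.

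The main obstacle --- and the step that needs the most care --- is the selection of the vertex $t$ whose up-closure in $T$ avoids $S$. The naive choice of an arbitrary $t\in C(S,\omega)\cap V(T)$ does not work, because the $T$-path from $t$ to a cofinal $w$ above it could leave $C(S,\omega)$ through $S$. The fix exploits that $T$ is rooted and $S$ is finite: the set of vertices of $T$ that lie above some vertex of $S$ (or equivalently, whose $T$-path to the root meets $S$) is contained in finitely many ``cones'' $\lceil s\rceil_T$, and removing these from $C(S,\omega)\cap V(T)$ still leaves an infinite --- in particular nonempty --- set, since $\omega\in\Abs T$ guarantees infinitude. Any $t$ in this leftover set has $\lceil t\rceil_T$ disjoint from $S$: indeed if some $t'\ge t$ were in $S$ then $t\in\lceil t'\rceil_T$, contradicting the choice of $t$. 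Once this is in place the rest is a routine check against the definition of the closure operator $\Abs{\,\cdot\,}$, using that $\omega\in\Abs{W}$ iff $C(S,\omega)\cap W\ne\emptyset$ for every finite $S$.
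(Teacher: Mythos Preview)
Your overall strategy is correct: show $\Abs{T}\subseteq\Abs{W}$ by fixing $\omega\in\Abs{T}$ and a finite $S\subseteq V(G)$, picking $t\in C(S,\omega)\cap V(T)$ whose \emph{up}-closure in $T$ avoids $S$, and then using cofinality to find $w\in W$ with $w\ge t$ so that the path $tTw\subseteq\uc{t}$ stays inside $C(S,\omega)$. (The paper does not reprove the lemma; it is quoted from~\cite{StarComb1StarsAndCombs}.)

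However, the step where you \emph{select} such a $t$ is wrong as written, because you have reversed up and down. You propose to discard the vertices of $T$ that lie \emph{above} some vertex of $S$, i.e.\ the set $\bigcup_{s\in S\cap V(T)}\uc{s}$. But what you need to discard is the set of $t$ whose up-closure meets $S$, i.e.\ those $t$ lying \emph{below} some $s\in S\cap V(T)$; this is $\bigcup_{s\in S\cap V(T)}\dc{s}$, which is finite since each $\dc{s}$ is a finite root-path. Your set $\bigcup_s\uc{s}$ can be infinite and may swallow all of $C(S,\omega)\cap V(T)$: take $T$ a ray rooted at its first vertex and $S=\{s\}$ a single inner vertex; then $C(S,\omega)\cap V(T)$ is exactly the tail after $s$, which is contained in $\uc{s}$, so your ``leftover set'' is empty. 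Your verification sentence inherits the same slip: from $t'\ge t$ and $t'\in S$ you cannot conclude $t\in\uc{t'}$ (that would mean $t\ge t'$); what you \emph{can} conclude is $t\in\dc{t'}$.

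Replace your discarded set by the finite set $\bigcup_{s\in S\cap V(T)}\dc{s}$; then $C(S,\omega)\cap V(T)$ minus this set is still infinite, any $t$ in it satisfies $\uc{t}\cap S=\emptyset$, and the rest of your argument goes through verbatim.
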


\begin{lemma}\label{lemma: Abs U plus dominating vts equals Abs U}
Let $G$ be any graph and let $U\subseteq V(G)$ be any vertex set. If $\hat{U}$ is the superset of $U$ also containing all the vertices dominating an end in the closure of $U$, then~$\Abs{\hat{U}}=\Abs{U}$.
In particular, $\Abs{U'}=\Abs{U}$ for all vertex sets $U'$ with $U\subset U'\subset \hat{U}$ and $\hat{U}$ contains all the vertices dominating an end in the closure of~$\hat{U}$.
\end{lemma}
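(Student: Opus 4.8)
The plan is to show the two inclusions $\Abs{U}\subseteq\Abs{\hat U}$ and $\Abs{\hat U}\subseteq\Abs{U}$; since $U\subseteq\hat U$, the first is immediate from the monotonicity of the closure operator $\partial_\Omega$, so all the work goes into the reverse inclusion. Here $\Abs{W}$ denotes the set of ends of $G$ in the closure of $W$, i.e.\ the ends $\omega$ such that every ray in $\omega$ has infinitely many vertices that can be linked to $W$ by disjoint paths (equivalently, $\omega\in\Abs{W}$ iff for every finite separator $S$ the component $C(S,\omega)$ containing $\omega$ meets $W$). So I would fix an end $\omega\in\Abs{\hat U}$ and a finite vertex set $S\subseteq V(G)$, and aim to produce a vertex of $U$ in the component $C=C(S,\omega)$; doing this for every finite $S$ witnesses $\omega\in\Abs{U}$.

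Let $C=C(S,\omega)$. Since $\omega\in\Abs{\hat U}$, the component $C$ contains a vertex $\hat v\in\hat U$. If $\hat v\in U$ we are done, so assume $\hat v\in\hat U\setminus U$; then by definition of $\hat U$ the vertex $\hat v$ dominates some end $\eta$ in the closure of $U$, i.e.\ $\eta\in\Abs{U}$. The key observation is that a vertex $v$ dominating an end $\eta$ ``belongs to'' $\eta$ in a strong sense relative to finite separators: for \emph{every} finite $S'\subseteq V(G)$ with $v\notin S'$, the vertex $v$ lies in the component $C(S',\eta)$ — indeed an infinite $v$--$(R-v)$ fan for a ray $R\in\eta$ survives the removal of the finite set $S'$, so $v$ is connected in $G-S'$ to a tail of $R$, hence lies in $C(S',\eta)$. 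Applying this with $S'=S$: since $\hat v=v\in C=C(S,\omega)$ and also $v\in C(S,\eta)$, and components of $G-S$ are either equal or disjoint, we get $C(S,\eta)=C(S,\omega)=C$. Now use $\eta\in\Abs{U}$: the component $C(S,\eta)=C$ meets $U$. This produces the required vertex of $U$ in $C$, completing the reverse inclusion and hence $\Abs{\hat U}=\Abs{U}$.

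For the ``in particular'' clause: given $U\subseteq U'\subseteq\hat U$, monotonicity gives $\Abs U\subseteq\Abs{U'}\subseteq\Abs{\hat U}$, and the two outer sets are equal by what we just proved, so $\Abs{U'}=\Abs{U}$. It remains to check $\hat U$ contains every vertex dominating an end in the closure of $\hat U$; but $\Abs{\hat U}=\Abs{U}$, so an end in the closure of $\hat U$ is already an end in the closure of $U$, and any vertex dominating it lies in $\hat U$ by the very definition of $\hat U$. The main obstacle is purely the careful handling of the reverse inclusion — specifically the lemma that a dominating vertex of $\eta$ lands in $C(S,\eta)$ after deleting any finite $S$ not containing it — but this is a standard consequence of the fan being infinite, so I expect no real difficulty; the rest is bookkeeping with the closure operator. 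If the conventions in \cite{StarComb1StarsAndCombs} already record the fact that a dominating vertex of $\eta$ lies in $C(S,\eta)$ for all finite $S\not\ni v$, I would simply cite it and the proof becomes a two-line diagram chase.
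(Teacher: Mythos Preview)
Your proof is correct and follows essentially the same approach as the paper: both fix an end $\omega\in\Abs{\hat U}$ and a finite separator, find a vertex $v\in\hat U$ in $C(S,\omega)$, and then argue that if $v\notin U$ then $v$ dominates some $\eta\in\Abs{U}$, whence $C(S,\eta)=C(S,\omega)$ meets~$U$. The only cosmetic difference is that the paper obtains $v$ as a tooth of a comb attached to $\hat U$ with spine in $\omega$, whereas you invoke the closure characterisation directly; your version even spells out more carefully why $C(S,\eta)=C(S,\omega)$ via the infinite fan, which the paper leaves implicit.
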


\begin{proof}
Every end in the closure of $U$ is contained in the closure of $\hat{U}$ because $\hat{U}$ contains $U$. 
For the other inclusion consider any end $\omega$ in the closure of $\hat{U}$. 
Given a finite vertex set $X\in \cX$ we show that $C(X,\omega)$ contains a vertex from $U$. 
Fix a comb \at $\hat{U}$ and with spine in $\omega$, and pick any tooth $v$ of the comb in the component $C(X,\omega)$ of $G-X$. Then either $v$ is contained in $U$, or $v$ dominates an end $\omega'$ in the closure of $U$ in which case $U$ must meet $C(X,\omega')=C(X,\omega)$.
Therefore, $C(X,\omega)$ meets $U$ for all $X\in\cX$, and so $\omega$ lies in the closure of $U$. 
\end{proof}

\noindent For our last key lemma, we shall need the following result of Jung (cf.~\cite[Theorem~3.5]{StarComb1StarsAndCombs}):

\begin{theorem}[Jung]\label{thm: NT and dispersed sets} 
Let $G$ be any graph. A vertex set $W\subset V(G)$ is normally spanned in $G$ if and only if it is a countable union of dispersed sets. In particular, 
$G$ is normally spanned if and only if $V(G)$ is a countable union of dispersed sets.
\end{theorem}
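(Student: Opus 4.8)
The plan is to prove the two implications of the characterisation separately. Recall that a vertex set $D\subseteq V(G)$ is \emph{dispersed} exactly when $\Abs D=\emptyset$, equivalently when $G$ contains no comb \at $D$; we may assume throughout that $G$ is connected, since a tree is connected. Of the two implications, ``normally spanned $\Rightarrow$ countable union of dispersed sets'' is the routine one, while its converse carries the real content.

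For the routine direction, suppose $W\subseteq V(T)$ for a rooted tree $T\subseteq G$ normal in $G$. Every vertex of $T$ has finite tree-distance from the root, so the level sets $L_0,L_1,L_2,\dots$ of $T$ partition $V(T)$ into countably many parts; since subsets of dispersed sets are dispersed, it suffices to show each $L_n$ is dispersed, for then $W=\bigcup_n(W\cap L_n)$. I would argue this from the standard separation theory of normal trees: if some end $\omega$ lay in $\Abs{L_n}\subseteq\Abs{V(T)}$, then $T$ would contain a ray $R_\omega$ from the root converging to $\omega$; writing $v$ for the vertex of $R_\omega$ at level $n$, the finite down-closure $\lceil v\rceil_T$ separates $\omega$ from the rest of $V(T)$ so strongly that the component of $G-\lceil v\rceil_T$ containing a tail of $R_\omega$ meets $V(T)$ only in vertices above $v$, hence in no vertex of $L_n$ --- contradicting $\omega\in\Abs{L_n}$.

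For the main direction, write $W=\bigcup_{n\in\N}D_n$ with each $D_n$ dispersed. I would build a normal tree $T\subseteq G$ containing $W$ recursively, producing an increasing chain $T_0\subseteq T_1\subseteq\cdots$ of rooted trees, each normal in $G$, with $D_0\cup\cdots\cup D_k\subseteq V(T_k)$ and --- as an invariant that keeps the recursion afloat --- with $V(T_k)$ itself a countable union of dispersed sets; then $T:=\bigcup_k T_k$ is normal in $G$ (any $T$-path is finite, hence a $T_k$-path for some $k$) and contains $W$. The heart of the matter is a single extension step, which I would isolate as a lemma: \emph{given a rooted tree $T^{*}\subseteq G$ normal in $G$ with $V(T^{*})$ a countable union of dispersed sets, and a dispersed set $D$, there is a normal tree $T^{**}\supseteq T^{*}$ in $G$ with $D\subseteq V(T^{**})$ whose vertex set is again a countable union of dispersed sets.} To build $T^{**}$ one processes $D\setminus V(T^{*})$: at each stage one has a current normal tree, picks an as-yet-uncovered $d\in D$, lets $C$ be the component of $G$ minus the current tree that contains $d$ --- its neighbourhood being a chain in the current tree, by normality --- and attaches a path through $C$ reaching $d$ so as to keep the tree normal, after which the added $D$-vertices and auxiliary finite paths preserve the decomposability invariant.

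I expect the main obstacle to be precisely the correctness of this extension step, and it is delicate on two counts. First, keeping the step normal requires controlling the chain neighbourhoods of the components one enters --- one wants to attach along the top of such a chain, and when the chain is infinite this needs real care; this is where the decomposability invariant is used. Second, and more seriously, one must show the process \emph{exhausts} $D$: if some $d\in D$ were left uncovered at every stage, the components witnessing its omission would nest ever deeper into $G$ along vertices forced to lie in $D$, assembling a comb \at $D$ and so contradicting $\Abs D=\emptyset$. Making this precise is exactly Jung's classical argument, and the real labour lies in the bookkeeping that keeps every partial tree simultaneously normal in $G$ and decomposable into countably many dispersed sets.
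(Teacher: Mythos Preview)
The paper does not prove this theorem. It is stated as a known result attributed to Jung and cited from the first paper of the series (as \cite[Theorem~3.5]{StarComb1StarsAndCombs}); it is then used as a black box in the proofs of Lemma~\ref{lemma: breadth first search tree} and Lemma~\ref{GraphsWithNSTminorClosed}. There is therefore no proof in the present paper to compare your proposal against.

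For what it is worth, your outline is the standard shape of Jung's argument. The forward direction via level sets is correct and can be made precise exactly along the lines you indicate, using the separation property recorded here as Lemma~\ref{lemma: separation properties normal tree}~(ii). For the converse, your recursive scheme with the invariant ``$V(T_k)$ is a countable union of dispersed sets'' is the right bookkeeping device; the one place where your sketch is thin is the extension step itself. Simply ``attaching a path through $C$ reaching $d$'' does not in general preserve normality: one must grow a normal spanning tree of the whole component $C$ (or at least a normal tree of $C$ containing the relevant part of $D$), attached at the top of the chain $N(C)$, and it is here that the dispersedness of $D$ and the decomposability invariant are genuinely needed --- both to ensure that the chain $N(C)$ has a top, and to control the new vertex set. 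Your termination argument (a leftover $d$ would yield a comb \at $D$) is the right idea. But since the paper offers no proof of its own, there is nothing further to compare.
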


\begin{lemma}\label{lemma: breadth first search tree}
Let $G$ be any graph and let $U\subseteq V(G)$ be normally spanned. If every end in the closure of $U$ is dominated by some vertex in~$U$, then there is a rayless tree $T\subset G$ containing $U$.  
\end{lemma}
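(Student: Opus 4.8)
The plan is to build the rayless tree $T$ by a transfinite breadth-first search that exhausts $U$. Since $U$ is normally spanned, by Theorem~\ref{thm: NT and dispersed sets} we may write $U=\bigcup_{n\in\N}U_n$ as a countable union of dispersed sets. First I would enlarge $U$ to the set $\hat U$ of Lemma~\ref{lemma: Abs U plus dominating vts equals Abs U}, i.e.\ add all vertices dominating an end in the closure of $U$; then $\Abs{\hat U}=\Abs U$, and by hypothesis every end in the closure of $\hat U$ is dominated by a vertex of $U$, hence of $\hat U$. It suffices to find a rayless tree containing $\hat U$ (it will in particular contain $U$); working with $\hat U$ gives us the closure property we need to keep the construction going. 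The key point is that, because every end in $\closure{\hat U}$ is dominated by a vertex \emph{inside} $\hat U$, whenever we have built a finite part of $T$ and want to reach a not-yet-included vertex $u\in\hat U$, the rays that could get in our way are all dominated from within the structure we are building, and so we can route around them using fans rather than adding rays.

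Concretely, I would enumerate $\hat U$ (or rather a cofinal spanning target) using the dispersed decomposition, and construct $T$ as an increasing union $T=\bigcup_{i}T_i$ of rayless subtrees, each obtained from the previous one by attaching finitely many paths (or by a more careful attachment that may add a dominating vertex together with a fan to handle a dispersed block). At stage $n$ we ensure all of $U_n$ has been absorbed; since each $U_n$ is dispersed, only finitely many vertices of $U_n$ lie outside any given finite separator, so the process of absorbing $U_n$ into the current tree terminates after countably many attachment steps, each attaching a finite path. To guarantee raylessness of the limit: a ray in $T$ would have to pass through infinitely many of the attachment steps, hence would correspond to an end $\omega\in\closure{T}=\closure{\hat U}=\closure U$ (using Lemma~\ref{lemma: closure U equals closure T} once $T$ contains $\hat U$ cofinally); but $\omega$ is dominated by some $v\in\hat U\subseteq V(T)$, and the construction should be arranged — this is where care is needed — so that once the dominating vertex $v$ and enough of its fan to $\omega$ have been placed in $T$, no new path is ever attached "beyond'' $v$ towards $\omega$, so $\omega$ cannot actually be realised by a ray of $T$. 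In other words, the dominating vertex acts as a barrier that the tree is never allowed to cross.

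The main obstacle I expect is precisely the bookkeeping that makes "the tree never crosses its dominating vertices'' rigorous while still absorbing all of $\hat U$: one must order the tasks (absorb vertex $u$; install a fan from a dominating vertex to each relevant end) so that each individual task is completed after finitely many finite attachments, and so that the barriers installed for already-handled ends are respected by all later attachments. The dispersedness of each $U_n$ is what keeps each absorption task finite-at-a-time, and the fact that there are only countably many $U_n$ and countably many stages is what keeps the whole construction of length $\omega$ (or countable ordinal), so that the limit tree is a genuine tree and the "infinitely many attachments'' characterisation of its rays is available. Once those two finiteness conditions are secured, raylessness follows from the barrier argument above, and $U\subseteq\hat U\subseteq V(T)$ by construction, completing the proof.
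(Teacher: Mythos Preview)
Your plan has the right flavour---build the tree by transfinite attachment and use domination to rule out rays---but the central mechanism is missing, and several supporting claims are wrong.

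First, two peripheral errors. Enlarging $U$ to $\hat U$ is unnecessary (the hypothesis already gives domination from within $U$) and potentially harmful: as the paper itself discusses in Section~\ref{section: proof of theorem 1}, $\hat U$ need not be normally spanned, so you would lose the decomposition into dispersed sets you need. Second, your use of ``dispersed'' is incorrect: a dispersed set has no end in its closure, but it can still be uncountable, so your claim that ``only finitely many vertices of $U_n$ lie outside any given finite separator'' and that the construction has countable length are both false in general.

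The real gap is the raylessness argument. You propose to install the dominating vertex $v$ together with ``enough of its fan'' as a barrier and then never attach beyond it. But a fan cannot live inside a tree, and ``beyond $v$ towards $\omega$'' is not well-defined during the construction, since you do not know in advance which ends will threaten to appear. The paper's proof avoids all of this with a single rule you never state: at every successor step, attach the new vertex by a path whose endpoint in the current tree has \emph{minimal height}. This is the ``breadth-first'' content. Raylessness is then proved by contradiction: a ray $R\subset T$ has its end in $\Abs U$ (Lemma~\ref{lemma: closure U equals closure T}), hence is dominated by some $u^*\in U$; using that every proper initial segment of the well-ordering of $U$ is dispersed, each $T_\alpha$ meets $R$ in only a finite initial segment; and then the fan at $u^*$ supplies, at some late stage $\beta$, a strictly lower attachment point than the one actually chosen for $P_{\beta+1}$, contradicting the minimal-height rule. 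Without that rule and that contradiction argument, your plan has no mechanism that actually prevents rays.
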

Normal trees follow the concept of depth-first search trees. Speaking informally, all ends of $G$ are `far away' from the perspective of any fixed vertex. This is why normal spanning trees grow towards the ends of the underlying graph in the sense that they contain (precisely) one normal ray from every end. We, however, seek to avoid having any rays in our tree. This is why our construction of a rayless tree containing $U$ will follow the opposite concept to depth-first search trees, namely that of breadth-first search trees. 

\begin{proof}[{Proof of Lemma~\ref{lemma: breadth first search tree}}] 
First we choose a well-ordering of $U$ all whose proper initial segments are dispersed: By Theorem~\ref{thm: NT and dispersed sets}, we have that $U$ is a countable union $\bigcup_{n\in\N}U_n$ of, say pairwise disjoint, dispersed sets $U_n$. Choose a well-ordering ${\preceq_n}$ of every vertex set $U_n$. 
Given $u,u'\in U$ with $u\in U_m$ and $u'\in U_n$, we write $u\preceq u'$ if either $m<n$ or $m=n$ with $u\preceq_m u'$ holds. 
It is straightforward to show that $\preceq$ defines a well-ordering of $U$ that is as desired.
From now on we view $U$ as well-ordered set~$(U,\preceq)$.

We recursively construct an ascending sequence $(T_\alpha)_{\alpha<\kappa}$ of rooted trees $T_\alpha$ sharing their root and satisfying that the overall union of the $T_\alpha$ is a rayless tree containing $U$. 
Let $T_0$ be the tree consisting of and rooted in the smallest vertex of $U$. 
In a limit step $\beta>0$ we let $T_\beta$ be the tree $\bigcup\, \{\,T_\alpha\mid \alpha <\beta\,\}$. 
In a successor step $\beta =\alpha +1$ we terminate and set $\kappa=\beta$ if $U$ is included in $T_\alpha$. 
Otherwise we let $u$ be the smallest vertex in $U\setminus V(T_\alpha)$. 
Following the concept of a breadth-first search tree, among all $u$--$T_{\alpha}$ paths fix one $P_{\beta}$ whose endvertex in $T_\alpha$ has minimal height in $T_{\alpha}$.
We obtain $T_{\beta}$ from $T_{\alpha}$ by adding the path $P_{\beta}$.

Let $T$ be the overall union of the trees $T_\alpha$, i.e., $T:=\bigcup\,\{\,T_\alpha \mid \alpha <\kappa\,\}$. 
Then $T$ is a rooted tree that contains $U$ cofinally. 
It remains to check that $T$ is rayless. Suppose for a contradiction that $R$ is a ray in $T$ starting in the root, say. 
By Lemma~\ref{lemma: closure U equals closure T} the end of the ray $R$ is contained in the closure of $U$. 
As all ends in $\Abs U$ are dominated by vertices in $U$, we find a vertex $u^* \in U$ dominating~$R$. 
Let $P_{\alpha^*}$ be the path from the construction of $T$ that added~$u^*$.

We claim that every tree $T_{\alpha}$ meets $R$ in a finite initial subpath. 
This can be seen as follows.
Since all proper initial segments of $U$ are dispersed, by Lemma~\ref{lemma: closure U equals closure T} it suffices to show that every $T_\alpha$ with $\alpha>0$ contains a subset of such a segment cofinally.
A transfinite induction on $\alpha$ shows that for $T_\alpha$ this subset may be chosen as the set of starting vertices of the paths $P_\xi$ with $\xi\le\alpha$ a successor ordinal while the proper initial segment may be chosen as the  down-closure in $U$ of the starting vertex of $P_{\alpha+1}$.
Here we remark that $\alpha+1<\kappa$ for all $\alpha<\kappa$ (i.e.\ $\kappa$ is a limit ordinal): indeed, by our assumption that $R\subset T$ we know that the vertex set $U$ is not dispersed and, therefore, meets infinitely many $U_n$.

Finally, we derive the desired contradiction.
Fix $\beta>\alpha^*$ so that the endvertex $x$  of $P_{\beta+1}$ in $T_\beta$ has larger height than  $u^*$ has in $T_\beta$ and so that $P_{\beta+1}$ contains an edge of $R$. Let $u$ be the first vertex of $P_{\beta+1}$, i.e., the smallest vertex in $U\setminus V(T_\beta)$. Note that the first vertex $w$ of $P_{\beta+1}$ that is contained in $R$ is distinct from $x$. (Also see Figure~\ref{fig: contradiction rayless tree}.)
\begin{figure}[ht]
    \centering
    \def\svgwidth{.5\columnwidth}
    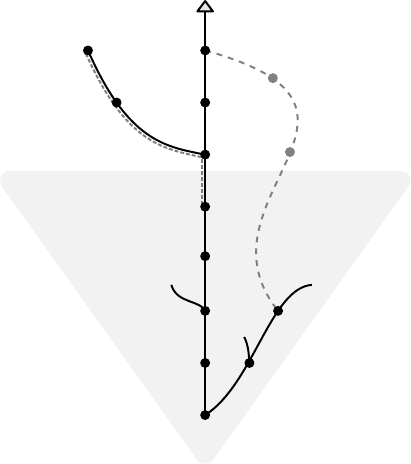
    \caption{The situation in the last paragraph of the proof of Lemma~\ref{lemma: breadth first search tree}.}
    \label{fig: contradiction rayless tree}
\end{figure}
As $u^*$ dominates $R$ we find an infinite set $\mathcal{Q}$ of $u^*$--$R$ paths in $G$ such that distinct paths in $\mathcal{Q}$ only meet in $u^*$. All but finitely many paths in $\mathcal{Q}$ meet $T_{\beta+1}$ precisely in $u^*$: Otherwise the end of $R$ is contained in the closure of $T_{\beta+1}$ contradicting that the vertex set of $T_{\beta+1}$ is dispersed. 
Fix a path $Q\in \mathcal{Q}$ meeting $T_{\beta+1}$ precisely in $u^*$ and having its endvertex $v$ in~$\mathring{w}R$.
We conclude that $uP_{\beta+1} w R v Q u^*$ would have been a better choice than $P_{\beta+1}$ in the construction of $T_{\beta+1}$ (contradiction).
\end{proof}

\begin{proof}[Proof of Theorem~\ref{thm: undominated comb duality approximation}]
By Lemma~\ref{lemma:rayless_tree_contains_star_at_U} at most one of (i) and (ii) holds at a time.
To verify that least one of (i) and (ii) holds, we show $\neg$(i)$\to$(ii). 
By Lemma~\ref{lemma: Abs U plus dominating vts equals Abs U} we may assume that $U$ contains all vertices dominating an end in the closure of $U$, and by Lemma~\ref{lemma: breadth first search tree} there is a rayless tree $T\subset G$ that contains $U$.
\end{proof}

\subsection{Deducing our duality theorem in terms of rayless trees}\label{section: proof of theorem 1}
Let us analyse why the proof of our duality theorem for undominated combs in terms of rayless trees for normally spanned graphs, Theorem~\ref{thm: undominated comb duality approximation}, does not immediately give a proof for arbitrary graphs. 
For this, consider any graph $G$ and let $U\subseteq V(G)$ be any vertex set. 
Furthermore, suppose that there is a normal tree $T\subset G$ that contains $U$ and that $G$ contains no undominated comb \at $U$. 
In the proof of Theorem~\ref{thm: undominated comb duality approximation} we assume without loss of generality that $U$ contains all the vertices dominating an end in the closure of $U$. 
This is possible because, by Lemma~\ref{lemma: Abs U plus dominating vts equals Abs U}, adding all the vertices to $U$ that dominate an end in the closure of $U$ does not change the set $\Abs{U}$ of ends in the closure of $U$. 
However, after adding all these vertices it may happen---in contrast to the situation in the proof of Theorem~\ref{thm: undominated comb duality approximation} where $G$ has a normal spanning tree---that $U$ is no longer normally spanned in $G$ (e.g.\ consider any countably infinite set $U$ of vertices in an uncountable complete graph). 
And $U$ being normally spanned in $G$ is a crucial requirement of the lemma that yields the desired rayless tree, Lemma~\ref{lemma: breadth first search tree}.

But maybe adding all the vertices that dominate an end in the closure of $U$ and maintaining that $U$ is normally spanned was too much to ask.
Indeed, Lemma~\ref{lemma: breadth first search tree} only requires that $U$ contains for every end $\omega\in\Abs{U}$ at least one vertex dominating $\omega$, and adding just one dominating vertex for every end $\omega$ might preserve the property of $U$ being normally spanned in $G$. The following example shows that this is in general false:
\begin{example}
Let $G$ be a \emph{binary tree with tops}, i.e., let $G$ be obtained from the rooted infinite binary tree $T_2$ by adding for every normal ray $R$ of $T_2$ a new vertex $v_R$, its \emph{top}, that is joined completely to $R$ (cf.~Diestel and Leader's \cite{DiestelLeaderNST}). Let $U$ be the vertex set of $T_2$. 
Then $\Abs{U} = \Omega(G)$ and every end  $\omega$ is dominated precisely by the top that was added for the unique normal ray of $T_2$ that is contained in $\omega$. Hence adding for every end in $\Abs{U}$ a vertex dominating it to $U$ results in the whole vertex set of $G$. However, as pointed out in \cite{DiestelLeaderNST}, the graph $G$ does not have a normal spanning tree.\end{example}

Our way out is to work in a suitable contraction minor, which requires some preparation: 
Let $H$ and $G$ be any two graphs. We say that $H$ is a contraction minor of $G$ with \emph{fixed branch sets} if an indexed collection of branch sets $\{\,V_x\mid x\in  V(H)\,\}$ is fixed to witness that $G$ is an $IH$. In this case, we write $[v]=[v]_H$ for the branch set $V_x$ containing a vertex $v$ of $G$ and also refer to $x$ by $[v]$.
Similarly, we write $[U]=[U]_{H}:=\{\,[u]\mid u\in U\,\}$ for vertex sets $U\subseteq V(G)$.

\begin{lemma}\label{lemma: rayless tree containing U if contraction minor has}
Let $G$ be any graph and let $H$ be any contraction minor of $G$ with fixed branch sets that induce subgraphs of $G$ with rayless spanning trees. 
Furthermore, let $U\subset V(G)$ be any vertex set.
If $H$ contains a rayless tree that contains $[U]$, then $G$ contains a rayless tree that contains $U$.
\end{lemma}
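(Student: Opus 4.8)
The plan is to take a rayless tree $T_H \subset H$ containing $[U]$ and "blow up" each of its nodes into a rayless spanning tree of the corresponding branch set, then splice these pieces together along edges of $G$ realising the edges of $T_H$, and finally argue the result can be pruned to a rayless tree of $G$ containing $U$. Concretely, for each node $x \in V(T_H)$, the branch set $V_x$ induces a subgraph $G[V_x]$ that by hypothesis has a rayless spanning tree $S_x$. For each edge $xy \in E(T_H)$, pick an edge $e_{xy}$ of $G$ with one endvertex in $V_x$ and one in $V_y$ (such an edge exists since $H$ is a contraction minor with these branch sets). Let $T' := \bigcup_{x} S_x \;\cup\; \{\,e_{xy} : xy \in E(T_H)\,\}$. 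Since $T_H$ is a tree and each $S_x$ is a tree, $T'$ is connected and its cycle space is trivial (any cycle would project to a closed walk in $T_H$, hence stay within a single $V_x$, contradicting that $S_x$ is a tree); so $T'$ is a tree. It contains $U$ because $[U] \subseteq V(T_H)$ means every $u \in U$ lies in some $V_x$ with $x \in V(T_H)$, and $S_x$ spans $V_x$.

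The main obstacle is showing $T'$ is rayless. A ray $R \subset T'$ projects to a walk in $T_H$; if this projection visits only finitely many nodes of $T_H$, then $R$ eventually stays inside a single $S_x$, contradicting that $S_x$ is rayless. If the projection visits infinitely many nodes, then $R$ meets infinitely many branch sets, and — since a ray uses each edge at most once and passes through the finitely-many-per-edge connecting edges $e_{xy}$ — the set of nodes $x$ with $V_x \cap R \neq \emptyset$ must itself carry a ray of $T_H$ (the projection of $R$, with repetitions removed, is essentially a ray or at least contains one, because $T_H$ is a tree so the walk cannot backtrack indefinitely without being eventually injective along an end). This contradicts that $T_H$ is rayless. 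Making the "projection contains a ray" step precise is the only delicate point: one observes that a ray in a tree that visits infinitely many vertices has a tail that is a genuine ray, and the preimage structure transfers this up to $T_H$.

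I would organise the write-up as: (1) fix the pieces $S_x$ and connecting edges $e_{xy}$ and define $T'$; (2) verify $T'$ is a tree containing $U$, using that branch sets partition $V(G)$ and $T_H$ is a tree to kill cycles; (3) verify $T'$ is rayless by the two-case projection argument above; (4) conclude. No rooting is needed, so I would not insist $T'$ be rooted — the statement only asks for a rayless tree containing $U$, and "rooted" versus "unrooted" is immaterial here (any tree can be rooted arbitrarily). One should double-check that the hypothesis gives rayless spanning trees of the \emph{induced} subgraphs $G[V_x]$, which is exactly what is stated ("branch sets that induce subgraphs of $G$ with rayless spanning trees"), so no extra work is needed there.
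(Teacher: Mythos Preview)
Your proposal is correct and follows exactly the paper's construction: fix a rayless spanning tree in each branch set, choose one $G$-edge realising each edge of $T_H$, and take the union. The paper dismisses the verification that this union is a rayless tree containing $U$ as ``straightforward'', so your projection argument actually supplies more detail than the paper does; to make the delicate step crisp, note that the projected walk in $T_H$ never repeats an edge (there is only one lifting edge $e_{xy}$ per edge $xy$), and an edge-simple walk in a tree is a path.
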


\begin{proof}
Let $T\subset H$ be a rayless tree that contains $[U]$. 
Fix for every branch set $W\in [V(T)]$ a rayless spanning tree $T_{W}$ in the subgraph that $G$ induces on $W$.
Furthermore, select one edge $e_f\in E_G(t_1,t_2)$ for every edge $f=t_1t_2\in T$.
It is straightforward to show that the union of all the trees $T_{W}$ plus all the edges $e_f$ is a rayless tree in $G$ that contains $U$.
\end{proof}

Let $H$ be a contraction minor of a graph $G$ with fixed branch sets. A subgraph $G'=(V',E')$ of $G$ can be passed on to $H$ as follows. Take as vertex set the set $[V']$ and declare $W_1W_2$ to be an edge whenever $E'$ contains an edge between $W_{1}$ and $W_{2}$. We write $[G']=[G']_H$ for the resulting subgraph of $H$ and call it the graph that is obtained by \emph{passing on} $G'$ to $H$.
If every vertex $W\in [V']$ meets $V'$ in precisely one vertex, then we say that $G'$ is \emph{properly passed on} to $H$. Note that if $G'$ is properly passed on to $H$, then $[G']$ and $G'$ are isomorphic. 

\begin{lemma}\label{lemma: properly passed on nt}
Let $H$ be a contraction minor of a graph $G$ with fixed branch sets and let $T\subseteq G$ be a tree that is normal in $G$. If $T$ is properly passed on to $H$, then $[T]\subset H$ is a tree that is normal in $H$. 
\end{lemma}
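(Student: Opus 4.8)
The plan is to verify the defining property of normality directly in $H$: every $[T]$-path in $H$ has endvertices that are comparable in the tree-order of $[T]$. Since $T$ is properly passed on, the map $v\mapsto[v]$ restricts to an isomorphism between $T$ and $[T]$, so the tree-order of $[T]$ is the image of the tree-order of $T$ under this isomorphism; in particular two vertices $[s],[t]$ of $[T]$ are comparable if and only if $s,t$ are comparable in $T$. So it suffices to take an arbitrary $[T]$-path $P=[s]\dots[t]$ in $H$ and produce a $T$-path in $G$ with endvertices $s$ and $t$, after which normality of $T$ in $G$ gives comparability of $s$ and $t$, hence of $[s]$ and $[t]$.

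First I would lift the $[T]$-path to $G$. Write $P=W_0W_1\cdots W_k$ with $W_0=[s]$, $W_k=[t]$, and each $W_i\notin V([T])$ for $0<i<k$ an internal vertex (recall a $[T]$-path has only its endvertices in $[T]$). By definition of $H$ as a contraction minor with these branch sets, each edge $W_{i}W_{i+1}$ of $H$ comes from some edge $e_i=a_ib_i$ of $G$ with $a_i\in W_i$ and $b_i\in W_{i+1}$. Each branch set $W_i$ induces a connected subgraph $G[W_i]$ of $G$, so for $0<i<k$ I can join $b_{i-1}$ to $a_i$ by a path inside $G[W_i]$; for $i=0$ I join $s$ to $a_0$ inside $G[W_0]$ (using that $W_0=[s]$ contains $s$), and for $i=k$ I join $b_{k-1}$ to $t$ inside $G[W_k]$. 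Concatenating these internal paths with the edges $e_0,\dots,e_{k-1}$ gives a walk in $G$ from $s$ to $t$; extracting an $s$--$t$ path $Q$ from this walk, all of whose internal vertices lie in the internal branch sets $W_1,\dots,W_{k-1}$, which are disjoint from $V(T)$ since $T$ is properly passed on (so $[V(T)]=V([T])$ meets only the branch sets $W_0,W_k$). Hence $Q$ is a $T$-path in $G$ with endvertices $s$ and $t$ — except in the degenerate case $k=0$, i.e.\ $[s]=[t]$, where $s=t$ already by properness and comparability is trivial.

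The one subtlety I would be careful about, and which is really the only obstacle, is the bookkeeping that $Q$ is genuinely a $T$-path, i.e.\ that it meets $T$ in exactly its two endvertices and is nontrivial: nontriviality because $P$ is a path (so $[s]\ne[t]$, forcing $s\ne t$), and the endvertex condition because when we extract a path from the concatenated walk we might a priori pick up further vertices of $W_0$ or $W_k$ lying in $T$ — but properness says $W_0\cap V(T)=\{s\}$ and $W_k\cap V(T)=\{t\}$, so no such extra vertices exist. Also $Q$ has at least one edge since $s\ne t$. Once $Q$ is in hand, normality of $T$ in $G$ yields that $s$ and $t$ are comparable in the tree-order of $T$, and transporting this comparability along the isomorphism $T\cong[T]$ finishes the proof. (One should also note at the outset that $[T]$ is indeed a tree: it is the image of the tree $T$ under the isomorphism induced by proper passing-on, as remarked in the paragraph preceding the lemma.)
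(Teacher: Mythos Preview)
Your proof is correct and follows essentially the same approach as the paper's: establish that $v\mapsto[v]$ is an isomorphism $T\cong[T]$, then lift an arbitrary $[T]$-path in $H$ to a $T$-path in $G$ using connectedness of branch sets, and invoke normality of $T$ in $G$. The paper dispatches the lifting step in one sentence (``it is straightforward to show that there is a $T$-path in $G$ between $\varphi^{-1}(W_0)$ and $\varphi^{-1}(W_k)$''), whereas you spell out the walk-to-path extraction and correctly note that properness is what guarantees $W_0\cap V(T)=\{s\}$, $W_k\cap V(T)=\{t\}$, and $W_i\cap V(T)=\emptyset$ for $0<i<k$; your only slip is the initial claim that the internal vertices of $Q$ lie in $W_1,\dots,W_{k-1}$, which you then correctly amend in the next paragraph.
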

\begin{proof}
Since $T$ is properly passed on to $G$ we have that $T$ and $[T]$ are isomorphic as witnessed by the bijection $\varphi$ that maps every vertex $t\in T$ to $[t]$. 
In order to see that $[T]$ is normal in $H$ when it is rooted in $[r]$ for the root $r$ of $T$, consider any $[T]$-path $W_0\dots W_k$ in $[H]$. 
Using that branch sets are connected, it is straightforward to show that there is $T$-path in $G$ between the two vertices $\varphi^{-1}(W_0)$ and $\varphi^{-1}(W_k)$ of $T$. Hence $W_0$ and $W_k$ must be comparable in $[T]$.
\end{proof}

We need two more lemmas for the proof of Theorem~\ref{thm: undominated comb duality}.
Recall that the \emph{generalised up-closure} $\guc{x}$ of a vertex $x\in T$ is the union of $\uc{x}$ with the vertex set of $\bigcup \cC(x)$, where the set $\cC(x)$ consists of those components of $G-T$ whose neighbourhoods meet $\uc{x}$.
\begin{lemma}[{\cite[Lemma~2.10]{StarComb1StarsAndCombs}}]\label{lemma: separation properties normal tree}
Let $G$ be any graph and $T\subset G$ any normal tree.
\begin{enumerate}
    \item Any two vertices $x,y\in T$ are separated in $G$ by the vertex set $\dc{x}\cap\dc{y}$.
    \item Let $W\subset V(T)$ be down-closed. Then the components of $G-W$ come in two types: the components that avoid $T$; and the components that meet $T$, which are spanned by the 
    sets $\guc{x}$ with $x$ minimal in $T-W$. 
\end{enumerate}
\end{lemma}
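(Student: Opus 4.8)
The plan is to prove~(ii) first and then read off~(i). Throughout I would lean on two standard features of a normal tree $T\subseteq G$: every $T$-path in $G$ has comparable endvertices (this is the definition), and consequently the neighbourhood $N(D)$ in $T$ of a component $D$ of $G-T$ is a chain in the tree-order --- for distinct $a,b\in N(D)\cap V(T)$ there is an $a$--$b$ $T$-path through $D$, so $a$ and $b$ are comparable.

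For~(ii), fix a down-closed $W\subseteq V(T)$ and first show that for each $x$ minimal in $T-W$ the generalised up-closure $\guc{x}$ is (the vertex set of) a component of $G-W$. It induces a connected subgraph, since $\uc{x}$ spans a subtree of $T$ and each $D\in\cC(x)$ is connected and joined to $\uc{x}$; and it avoids $W$, since $W$ is down-closed with $x\notin W$ (so $\uc{x}\cap W=\emptyset$) while the components of $G-T$ miss $V(T)\supseteq W$. The real work is the inclusion $N_G(\guc{x})\subseteq W$. Given a neighbour $v\notin\guc{x}$ of $\guc{x}$: if $v\in V(T)$, then $v$ is adjacent either to some $u\in\uc{x}$ or to a vertex of some $D\in\cC(x)$; in the first case $v,u$ are comparable and $v\notin\uc{x}$ forces $v<x$, in the second $v$ lies on the chain $N(D)$, which also meets $\uc{x}$, and again $v\notin\uc{x}$ forces $v<x$; either way $v$ belongs to the proper down-closure of $x$, which is contained in $W$ because $x$ is minimal in $T-W$. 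If instead $v\notin V(T)$, then $v$ lies in a component $D'$ of $G-T$ with $D'\notin\cC(x)$, but a neighbour of $v$ in $\guc{x}$ would either put a neighbour of $D'$ in $\uc{x}$ (forcing $D'\in\cC(x)$) or give an edge between the distinct components $D'$ and some $D\in\cC(x)$ of $G-T$ (impossible); so this case cannot occur. Hence $\guc{x}$ is a component of $G-W$, and it meets $T$.

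Conversely, if $C$ is a component of $G-W$ meeting $T$, pick $v\in C\cap V(T)$; as $W$ is down-closed it meets the chain $\dc{v}$ in an initial segment, so $\dc{v}\setminus W$ has a least element $x$, which is then minimal in $T-W$ with $v\in\uc{x}\subseteq\guc{x}$, whence $C=\guc{x}$ by the previous paragraph. The remaining components of $G-W$ contain no vertex of $T$ --- the first type. Finally~(i) follows by applying~(ii) to the down-closed set $W:=\dc{x}\cap\dc{y}$: if $x,y$ are comparable one of them lies in $W$ and we are done; otherwise, writing $z:=x\wedge y$ so that $W=\dc{z}$, the children $z_x,z_y$ of $z$ below $x$ and $y$ are distinct and minimal in $T-W$, with $x\in\guc{z_x}$ and $y\in\guc{z_y}$, and since $z_x,z_y$ are incomparable the chain property rules out any component of $G-T$ attached to both $\uc{z_x}$ and $\uc{z_y}$ and normality rules out a $T$-edge between them, so $\guc{z_x}\neq\guc{z_y}$ are different components of $G-W$ and $W$ separates $x$ from~$y$.

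The step I expect to be the main obstacle is the inclusion $N_G(\guc{x})\subseteq W$: this is where the definition of the generalised up-closure has to be unpacked carefully and where the chain property of the neighbourhoods of components of $G-T$ is genuinely used, both to locate $v$ strictly below $x$ and to forbid a component of $G-T$ straddling two branches of $T-W$. The comparable case of~(i) and the identification of the first type of component are routine.
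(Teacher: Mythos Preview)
The paper does not prove this lemma here; it is imported verbatim from the first paper of the series~\cite{StarComb1StarsAndCombs}, so there is no proof to compare against. Your argument is correct and is the standard one: you show each $\guc{x}$ with $x$ minimal in $T-W$ is a component of $G-W$ by computing its neighbourhood (using that $N(D)$ is a chain for every component $D$ of $G-T$), identify every component meeting $T$ with such a $\guc{x}$, and then read off~(i) by taking $W=\dc{x}\cap\dc{y}$. One minor redundancy: in your derivation of~(i), having already established in~(ii) that distinct minimal elements yield distinct components, you need not re-argue that $\guc{z_x}\neq\guc{z_y}$ via the chain property and normality---it follows immediately since $z_x\in\guc{z_x}\setminus\guc{z_y}$.
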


\begin{lemma}[{\cite[Lemma~2.11]{StarComb1StarsAndCombs}}]\label{NormalTreeNormalRay}
If $G$ is any graph and $T\subset G$ is any normal tree,
then every end of $G$ in the closure of $T$ contains exactly one normal ray of $T$.
Moreover, sending these ends to the normal rays they contain defines a bijection between $\Abs{T}$ and the normal rays of~$T$.
\end{lemma}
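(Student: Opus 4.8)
The plan is to keep track, for a given end $\omega\in\Abs{T}$, of the set
\[
 S_\omega:=\{\, t\in V(T) : \omega\in\Abs{\guc{t}}\,\}
\]
of those tree-vertices whose generalised up-closure still has $\omega$ in its closure, and to show that $S_\omega$ is the vertex set of the normal ray we are looking for. First I would record the two separation facts about normal trees that drive everything. The neighbourhood in $T$ of any component of $G-T$ is a chain: otherwise two incomparable vertices of $T$ would be joined by a $T$-path through that component, contradicting normality. Together with Lemma~\ref{lemma: separation properties normal tree}(ii) (applied to the down-closed set $\dc{t}\cap\dc{t'}$) this gives that for incomparable $t,t'$ the sets $\guc{t}$ and $\guc{t'}$ are disjoint and span different components of $G$ minus the finite set $\dc{t}\cap\dc{t'}$. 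Consequently $\omega$ can lie in the closure of at most one of two such generalised up-closures, so $S_\omega$ is a chain; and since $\guc{t}\subseteq\guc{s}$ whenever $s\le t$, the set $S_\omega$ is down-closed and contains the root (as $\guc{r}\supseteq V(T)$ and $\omega\in\Abs{T}$).

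For \emph{existence} I would construct the normal ray recursively, maintaining the invariant $\omega\in\Abs{\guc{t_n}}$. Starting from $t_0=r$, suppose $t_0<\dots<t_n$ has been built with $t_{i+1}$ a child of $t_i$. Apply Lemma~\ref{lemma: separation properties normal tree}(ii) to the down-closed chain $\dc{t_n}$: the component $C(\dc{t_n},\omega)$ meets $T$ (because $\omega\in\Abs{T}$) and is therefore spanned by $\guc{x}$ for some $x$ minimal in $T-\dc{t_n}$. This $x$ lies in $S_\omega$ as well, and since $S_\omega$ is a chain, $x$ is comparable to $t_n$; minimality forces its parent into $\dc{t_n}$, which pins down $x$ as a child of $t_n$. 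Setting $t_{n+1}=x$ preserves the invariant, and the recursion never halts, yielding a normal ray $R=t_0t_1t_2\dots$ with tail in $\guc{t_n}$ for every $n$. The crux, which I expect to be the main obstacle, is to verify that the end of $R$ really is $\omega$: both $\omega$ and the end of $R$ lie in $\bigcap_n\Abs{\guc{t_n}}$, the $\guc{t_n}$ are nested, and consecutive ones are separated by the finite set $\dc{t_n}$ whose $\omega$-side is exactly the component spanned by $\guc{t_{n+1}}$, which contains the tail of $R$; hence no finite vertex set can separate the two ends, and they coincide.

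For \emph{uniqueness} within a single end I would argue that two distinct normal rays $R,R'$ share an initial segment and then branch at a vertex into two incomparable children $c\ne c'$; their tails lie in $\uc{c}$ and $\uc{c'}$, which are separated by the finite down-closure $\dc{c}\cap\dc{c'}$, so $R$ and $R'$ belong to different ends. Thus each end of $G$ in $\Abs{T}$ contains exactly one normal ray. Finally, for the \emph{bijection}: the map sending $\omega\in\Abs{T}$ to the unique normal ray it contains is well defined by the existence and uniqueness just shown; it is injective because a ray lies in exactly one end of $G$; and it is surjective because every normal ray $R\subseteq T$ belongs to some end $\omega_R$, and since $R\subseteq T$ we have $\omega_R\in\Abs{T}$ with $\omega_R\mapsto R$. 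Assembling these four points completes the proof, with the verification that the constructed ray lands in $\omega$ being the one genuinely technical step; all the separation statements reduce cleanly to Lemma~\ref{lemma: separation properties normal tree} and the chain property of $G-T$ neighbourhoods.
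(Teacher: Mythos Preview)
The paper does not prove this lemma; it is quoted from the first paper of the series. Your overall strategy---build the normal ray by repeatedly applying Lemma~\ref{lemma: separation properties normal tree}(ii) to the chain $\dc{t_n}$, then argue uniqueness and bijectivity separately---is sound, and the uniqueness and bijection parts are correct as written.

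The step you rightly flag as the crux, showing that the constructed ray $R=t_0t_1\ldots$ actually lies in~$\omega$, has a genuine gap. You argue that since both $\omega$ and the end $\omega_R$ of $R$ lie in $\bigcap_n\Abs{\guc{t_n}}$, no finite set can separate them. But membership in this intersection does not determine an end: the sets $\guc{t_n}$ need not eventually avoid a given finite~$X$, because any vertex in a component $C$ of $G-T$ whose (chain) neighbourhood is cofinal in $R$ lies in $\guc{t_n}$ for every~$n$. Concretely, let $T$ be the ray $t_0t_1t_2\ldots$, attach a vertex $c_0$ carrying two further rays $c_0c_1c_2\ldots$ and $c_0c_1'c_2'\ldots$, and add the edges $c_it_i$ for all $i\ge 0$. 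Then $T$ is normal in the resulting graph, and the end of $c_1'c_2'\ldots$ lies in every $\Abs{\guc{t_n}}$ yet is separated from $\omega_R$ by~$\{c_0\}$.

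The fix is to invoke the hypothesis $\omega\in\Abs{T}$ once more and pass from $\guc{t_n}$ to~$\uc{t_n}$. Since $\omega\in\Abs{V(T)}$ and $\omega$ lives in $G[\guc{t_n}]$, a comb with spine in $\omega$ and teeth in $V(T)$ has cofinitely many teeth in $\guc{t_n}\cap V(T)=\uc{t_n}$; hence $\omega\in\Abs{\uc{t_n}}$ for every~$n$. Now the connected sets $\uc{t_n}\subset V(T)$ \emph{do} eventually avoid any given finite $X$ (no tree vertex can lie above every~$t_n$), so for large $n$ we have $\uc{t_n}\subset C(X,\omega)\cap C(X,\omega_R)$, whence $C(X,\omega)=C(X,\omega_R)$ and $\omega=\omega_R$. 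With this amendment your proof goes through.
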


\begin{proof}[Proof of Theorem~\ref{thm: undominated comb duality}]
Given a normally spanned vertex set $U\subset V(G)$ we have to show that \tfad
\begin{enumerate}
    \item $G$ contains an undominated comb \at $U$;
    \item $G$ contains a rayless tree that contains $U$.
\end{enumerate}
\noindent By Lemma~\ref{lemma:rayless_tree_contains_star_at_U} at most one of (i) and (ii) holds at a time.
To verify that at least one of (i) and (ii) holds, we show $\neg$(i)$\to$(ii). For this, we may assume by Lemma~\ref{lemma: closure U equals closure T} that $U$ is the vertex set of a normal tree $T\subset G$.
In the following we will find a contraction minor $H$ of $G$ with fixed branch sets $V_x$ such that:
\begin{itemize}
    \item[\textbf{--}] all $G[V_x]$ have rayless spanning trees; 
    \item[\textbf{--}] $T$ is properly passed on to $H$;
    \item[\textbf{--}] and every end of $H$ in the closure of $[T]\subset H$ is dominated in $H$ by some vertex of $[T]$.
\end{itemize}
Before we prove that such $H$ exists, let us see how to complete the proof once $H$ is found. 
By Lemma~\ref{lemma: properly passed on nt}, the tree $[T]$ is normal in $H$, and it has vertex set $[U]$ because $V(T)=U$. 
So, by Lemma~\ref{lemma: breadth first search tree}, the graph $H$ contains a rayless tree that contains $[U]$.
Finally, by Lemma~\ref{lemma: rayless tree containing U if contraction minor has}, this rayless tree in $H$ containing $[U]$ gives rise to a rayless tree in $G$ containing $U$ as desired. 

In order to construct $H$, fix for every normal ray $R$ of $T$ a vertex $v_R$ dominating $R$ in $G$. 
Let $\mathcal{R}$ be the set of all normal rays $R$ of $T$ for which $v_R$ is contained in a component $C_R$ of $G-T$.  
Note that the down-closure of the neighbourhood of each $C_R$ is $V(R)$ due to the separation properties of normal trees (Lemma~\ref{lemma: separation properties normal tree}). 
Thus, we have $C_R\neq C_{R'}$ for distinct normal rays $R,R'\in \mathcal{R}$. 
Fix a $v_R$--$R$ path $P_R$ for every $R\in\mathcal{R}$. 
Then the overall union of the paths $P_R$ is a forest of subdivided stars, each having its centre on $T$. 
Let us refer by $S_R$ to the subdivided star that contains $v_R$ for $R\in \mathcal{R}$, i.e., $S_R$ is the union of all the paths $P_{R'}$ that contain the last vertex of $P_R$ and this last vertex is the centre of $S_R$. 
Let $H$ be the contraction minor of $G$ with fixed branch sets defined as follows: 
if $v$ is contained on a path $P_R$, then put $[v]:=S_R$; otherwise let $[v]:=\{v\}$.
Then, in particular, every branch set of $H$ induces a subgraph of $G$ that has a rayless spanning tree.

As every star $S_R$ meets $T$ precisely in its centre, the tree $T$ is properly passed on to $H$.
By Lemma~\ref{lemma: properly passed on nt}, the tree $[T]\subseteq H$ is normal in $H$ and $V([T])=[U]$ since $V(T)=U$.
And by Lemma~\ref{NormalTreeNormalRay} it remains to show that every normal ray of $[T]$ is dominated in $H$ by some vertex of~$[T]$.
For this, we consider three cases. In all three cases, fix any normal ray $R\subset T$ and some collection $\mathcal{P}$ of infinitely many $v_R$--$R$ paths in $G$ meeting precisely in $v_R$.

First assume that $R\in \mathcal{R}$. Note that only finitely many of the paths in $\mathcal{P}$ meet $\mathring{v_R}P_R$, without loss of generality none. 
Then all graphs $[P]\subset H$ with $P\in \mathcal{P}$ are $[v_R]$--$[R]$ paths that meet only in $[v_R]$. 
This shows that $[v_R]\in [T]$ dominates~$[R]$ in~$H$.  

Second, suppose that $R\notin \mathcal{R}$ and that every branch set of $H$ other than $ [v_R]$ meets only finitely many of the paths in $\mathcal{P}$. 
By thinning out $\mathcal{P}$ we may assume that every branch set other than $[v_R]$ meets at most one of the paths in $\mathcal{P}$. Then the connected graphs $[P]$ with $P\in \mathcal{P}$ pairwise meet in $[v_R]$ but nowhere else and all contain a vertex of $[R]$ other than $[v_R]$. 
Taking one $[v_R]$--$([R]-[v_R])$ path inside each $[P]$ yields a fan witnessing that $[v_R]\in [T]$ dominates~$[R]$ in $H$.

Finally, suppose that $R\notin\mathcal{R}$ and that some branch set $S\neq [v_R]$ of $H$ meets infinitely many of the paths in $\mathcal{P}$, say all of them. 
We write $c$ for the centre of $S$. Without loss of generality none of the paths in $\mathcal{P}$ contains $c$. Also note that $c$ is contained in $V(R)$ as otherwise all the paths in $\mathcal{P}$ need to pass through the finite down-closure of $c$ in $T$ in vertices other than $v_R$.
Let $\mathcal{R}'$ be the collection of normal rays  of $T$ that satisfies $S=\bigcup\,\{\,V(P_{R'})\mid R'\in \mathcal{R}'\,\}$. 
For every $v_R$--$R$ path $P\in \mathcal{P}$ let $v_P$ be the last vertex on $P$ that is contained in $S$, let $w_P$ be the first vertex on $P$ after $v_P$ in which $P$ meets $T$ and let $Q_P$ be the unique $w_P$--$R$ path in~$T$. (See Figure~\ref{fig: proof duality thm rayless tree}.)
For every path $P\in \mathcal{P}$ let $P'=P'(P):= v_P Pw_PQ_P$, and let $\mathcal{P}'=\mathcal{P}'(\mathcal{P}):= \{\,P'\mid P\in \mathcal P\,\}$. 

\vspace*{0.005cm}
\begin{figure}[h]
    \centering
    \def\svgwidth{.6\columnwidth}
    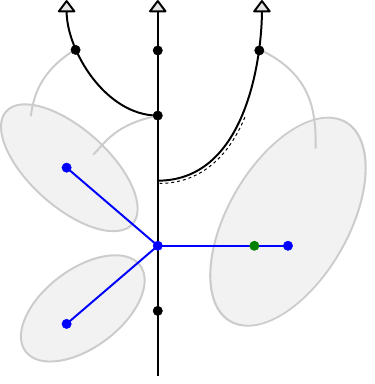
    \caption{The final case in the proof of our duality theorem for  undominated combs in term of rayless trees.}
    \label{fig: proof duality thm rayless tree}
\end{figure}

Each path $P_{R'}\mathring{c}\subset S$ with $R'\in\mathcal{R}'$ meets only finitely many paths from $\mathcal{P}'$, and these latter paths are precisely the paths in $\mathcal{P}'$ that meet $C_{R'}$: 
This is because every path in $\mathcal{P}'$ that meets $C_{R'}$ starts in a vertex $v_P\in C_{R'}$ and after leaving $C_{R'}$ only traverses through vertices of $T$.
Therefore, by replacing $\mathcal{P}$ with an infinite subset of $\mathcal{P}$, we can see to it that every component $C_{R'}$ with $R'\in\mathcal{R}'$ meets at most one of the paths in the then smaller set $\mathcal{P}'=\mathcal{P}'(\mathcal{P})$.
In countably many steps we fix paths $P_1',P_2',\dots$ in $\mathcal{P}'$ so that their last vertices are pairwise distinct: In order to see that this is possible suppose for a contradiction that $t\in R$ is maximal in the tree order of $T$ so that $t$ is the last vertex of a path in $\mathcal{P}'$. 
Note that $R$ together with the paths $v_P P$ with $P\in \mathcal{P}$ forms a comb in $G$. 
Hence infinitely many of the paths $v_P P$ are contained in the same component of $G-\dc{t}$ as some tail of~$R$.
By Lemma~\ref{lemma: separation properties normal tree}, this component is of the form $\guc{t'}$ for the successor $t'$ of $t$ on~$R$.
In particular, we find some $P\in\mathcal{P}$ so that $w_P$ lies above $t'$ in the tree order of~$T$. But then the endvertex of $Q_P$ in $R$ lies above $t'$ and, in particular, above $t$, contradicting the choice of $t$.

So let $P_1',P_2',\dots$ be paths in $\mathcal{P}'$ with pairwise distinct last vertices. 
We show that the paths $P_i'$ give rise to $S$--$[R]$ paths $[P_i']$ in $H$ that form an infinite $S$--$[R]$ fan witnessing that $S$ dominates $[R]$ in $H$.
Every path $P_i'$ is an $S$--$R$ path because every path in $\mathcal{P}'$ is an $S$--$R$ path by the choice of the vertices $v_P$.
Moreover, the paths $P_i'$ are pairwise disjoint: Every path $P_i'$ starts in a component $C_{R'}$.
Using the choice of the vertices $v_P$ with $P\in \mathcal{P}$ as the last vertex on $P$ that is contained in $S$ we have that the $[P_i']$ are $S$--$[R]$ paths of $H$ that only share their first vertex~$S$. Hence the $[P_i']$ form an infinite $S$--$R$ fan in $H$ and we conclude that $S\in [T]$ dominates $[R]$ in $H$.
\end{proof}

\section{Spanning trees reflecting the undominated ends}\label{section: applications rayless tree} 

\noindent In \cite{halin64}, Halin conjectured that every connected graph has a spanning tree that is end-faithful for all its ends.
However, Seymour and Thomas' counterexample in Theorem~\ref{raylessSpanningTreeCounterexample} shows that his conjecture is in general false. 
Recently, Carmesin~\cite{carmesin2014all} amended Halin's conjecture by proving the follwing: 
\begin{theorem}[Carmesin~2014]\label{thm: every graph has st displaying undominated ends}
Every connected graph $G$ has a spanning tree that is end-faithful for the undominated ends of $G$.
\end{theorem}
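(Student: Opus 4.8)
The plan is to dispose of an easy case first and then reduce the general statement to building a suitable tree-decomposition. If $G$ has a normal spanning tree $T$, then $T$ is already as required: by Lemma~\ref{NormalTreeNormalRay} (applied with $T$ spanning) every end of $G$ contains exactly one normal ray of $T$ and this correspondence is a bijection, so $T$ is end-faithful for \emph{all} ends of $G$, in particular for the undominated ones. The substantial case is a graph $G$ without a normal spanning tree. For such $G$ the plan is: \textbf{(1)} produce a tree-decomposition $\mathcal{T}=(S,(V_s)_{s\in V(S)})$ of $G$ with finite, connected adhesion sets that \emph{displays the undominated ends} — meaning no undominated end of $G$ lives in a single part, and distinct undominated ends of $G$ are traced to distinct ends of the decomposition tree $S$; and \textbf{(2)} convert $\mathcal{T}$ into a spanning tree of $G$ that is end-faithful for the undominated ends.

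For step~(1) I would build $\mathcal{T}$ by recursion down $S$, the recursion step being one application of Theorem~\ref{thm: undominated comb johannes ii}. At a node $s$ we have a subgraph $G_s$ of $G$ (the union of the parts still to be built below $s$) with a finite connected boundary, and we apply Theorem~\ref{thm: undominated comb johannes ii} inside $G_s$ to the vertex set $U_s$ obtained from $V(G_s)$ by deleting every vertex that is a tooth of some undominated comb of $G_s$ \at $V(G_s)$ and then re-adding the finite boundary. A short argument — a subcomb of an undominated comb \at $U_s$ is again an undominated comb of $G_s$ \at $V(G_s)$, so all of its teeth would have been deleted from $U_s$ — shows that $G_s$ contains no undominated comb \at $U_s$, so Theorem~\ref{thm: undominated comb johannes ii} returns a star-decomposition of $G_s$ with finite connected adhesion sets whose central part contains $U_s$ and all of whose undominated ends live in the leaves' parts; we take this central part as $V_s$ and recurse into the leaves' parts. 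Two easy facts keep the recursion honest: an undominated end of $G$ living in $G_s$ is still undominated \emph{in} $G_s$ (a vertex of $G_s$ dominating it there would dominate it in $G$, the fan lying in $G_s$); hence no undominated end of $G$ ever lands in a central part, so it is pushed strictly deeper at each step and is traced to a genuine end of $S$. (This step uses Theorem~\ref{thm: undominated comb johannes ii} from the next section; alternatively one may feed a Carmesin tree-decomposition of $G$ directly into step~(2).)

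For step~(2) I would, for each part $V_s$, choose a spanning tree $T_s$ of $G[V_s]$, choose one $G$-edge $e_s$ between each non-root part and its parent, and glue: let $T$ be $\bigcup_s T_s$ together with all the $e_s$, then delete edges to destroy the finitely many cycles created along the finite adhesion sets, which never disconnects $T$ because those adhesion sets are connected; then $T$ is a spanning tree of $G$. To see end-faithfulness for the undominated ends: a ray $R$ of $T$ whose end $\omega$ is undominated in $G$ cannot stay in one part, so it meets infinitely many parts and, by the tree structure of $S$, eventually descends a ray $S_R$ of $S$; since between consecutive connection vertices $T$ offers only the \emph{unique} path inside the relevant part, $R$ is determined by $S_R$, and $S_R$ is precisely the ray of $S$ tracing $\omega$; conversely every ray of $S$ that traces an undominated end carries such a ray of $T$. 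Hence $R\mapsto\omega$ is the required bijection. I expect the main obstacle to lie entirely in step~(1): merely invoking Theorem~\ref{thm: undominated comb johannes ii} at each node is not enough — one must choose the star-decompositions coherently so that $(S,(V_s))$ really is a tree-decomposition (every vertex eventually lands in a part, so nothing is "lost at an end of $S$") and so that it truly displays the undominated ends, i.e.\ distinct undominated ends of $G$ are eventually separated by adhesion sets and each end of $S$ in the trace carries exactly one end of $G$; this is the combinatorial heart, and is where one needs the nested-separation bookkeeping underlying Carmesin tree-decompositions. The acyclicity and connectedness bookkeeping in the gluing of step~(2) is routine, but genuinely uses that the adhesion sets are connected.
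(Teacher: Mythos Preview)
The paper does not give its own proof of this theorem: it is quoted as Carmesin's result, with the remark that Carmesin proved it by first establishing Theorem~\ref{thm: tdc displaying undom ends} (the rooted tree-decomposition with upwards disjoint finite connected separators displaying the undominated ends) and then extracting a spanning tree from that decomposition. So the benchmark you are being compared against is: obtain Theorem~\ref{thm: tdc displaying undom ends}, then do a gluing step analogous to the one carried out in the proof of Theorem~\ref{thm: reformulation special spanning tree}.

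Your proposal does not provide an independent route to step~(1). You build the tree-decomposition by iterating Theorem~\ref{thm: undominated comb johannes ii}, but in this paper Theorem~\ref{thm: undominated comb johannes ii} is itself derived from Theorem~\ref{thm: tdc displaying undom ends}; so your argument presupposes the very Carmesin decomposition it is meant to reconstruct. You notice this yourself (``alternatively one may feed a Carmesin tree-decomposition of $G$ directly into step~(2)''), which is exactly the paper's approach. More concretely, your definition of $U_s$ collapses: in any connected $G_s$ that has an undominated end, \emph{every} vertex of $G_s$ is a tooth of some undominated comb \at $V(G_s)$ (take any undominated ray together with a single path from the vertex to that ray), so after your deletion $U_s$ is just the finite boundary. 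Theorem~\ref{thm: undominated comb johannes ii} then returns a star-decomposition whose central part need only contain that finite set, and the recursion makes no progress on covering $V(G)$; the ``every vertex eventually lands in a part'' problem you flag is not a technicality here but the whole difficulty.

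Your step~(2) is in the right spirit and close to what the paper does in the proof of Theorem~\ref{thm: reformulation special spanning tree}, but note one point you gloss over: merely choosing arbitrary spanning trees $T_s$ of the parts and ``deleting edges to kill the finitely many cycles'' does not by itself control the rays of $T$ that stay inside a single part; in the paper's version this is handled by choosing \emph{rayless} trees inside the parts (via Theorem~\ref{thm: undominated comb duality}), which is where the normal-spanning hypothesis enters. For bare end-faithfulness (as opposed to reflecting) one can get away with less, but some argument is still needed that distinct rays of $T$ living in one part do not end up in distinct ends of $T$ inside a common dominated end of $G$.
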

\noindent Carmesin pointed out that his theorem is best possible in that it becomes false when one replaces `is end-faithful for' with the more specific `reflects' in its wording:
by Theorem~\ref{raylessSpanningTreeCounterexample} there are connected graphs without rayless spanning trees all whose rays are dominated. 
Characterising the graphs that have spanning trees reflecting their undominated ends has remained an open problem.

In order to prove Theorem~\ref{thm: every graph has st displaying undominated ends}, Carmesin developed the following theorem: 

\begin{theorem}[Carmesin, {\cite[Theorem~2.17]{StarComb1StarsAndCombs}}]\label{thm: tdc displaying undom ends}
Every connected graph $G$ has a rooted tree-decomposition with upwards disjoint finite connected separators that displays the undominated ends~of~$G$.
\end{theorem}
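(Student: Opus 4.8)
The plan is to realise the desired tree-decomposition as a nested system of finite connected separators, via the standard correspondence between nested separation systems and tree-decompositions. The cornerstone is the elementary reformulation of domination: a vertex $v$ dominates an end $\omega$ of $G$ exactly when $v\in C(S,\omega)$ for \emph{every} finite set $S\subseteq V(G)\setminus\{v\}$; equivalently, $\omega$ is undominated exactly when every vertex can be separated from $\omega$ by a finite set. Two refinements are needed at the outset. Since $G$ is connected, any finite separator can be joined up by finitely many paths into a finite \emph{connected} one without amalgamating the two relevant sides of the induced separation. Hence for every undominated end $\omega$ and every vertex $v$ there is a finite connected separator $S$ with $v\notin C(S,\omega)$, and any two distinct ends of $G$ are distinguished by some finite connected separator.

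For graphs $G$ with a normal spanning tree $T$---in particular for countable $G$---one can be explicit and short. By Lemma~\ref{NormalTreeNormalRay} the ends of $G$ correspond to the normal rays $R=r_0r_1\dots$ of $T$; along each ray the down-closures $\dc{r_n}$ form an ascending sequence of finite connected sets, and the ray is undominated precisely when these sets separate it from every vertex. One then extracts from $T$ the decomposition tree by contracting the maximal subtrees of $T$ that contain no undominated normal ray, choosing one spine per undominated end; the adhesion sets are connected subsets of the finite sets $\dc{r_n}$, and with a little care---using Lemma~\ref{lemma: separation properties normal tree}---one arranges that the adhesion sets going up from the children of any node are pairwise disjoint. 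In general, however, $G$ need not be normally spanned, and---as the ``binary tree with tops'' shows---one cannot repair this by adjoining dominating vertices. So for arbitrary $G$ I would build the nested family of finite connected separators by transfinite recursion, equivalently refining an initially trivial tree-decomposition one separator at a time: well-order a sufficient supply of tasks (for every undominated end $\omega$ and vertex $v$, separate $v$ from $\omega$; for every pair of undominated ends, distinguish them), and at each successor step insert a finite connected separator solving the next unsolved task, chosen nested with all separators chosen so far---possible because any two finite connected separators can be replaced by nested ones achieving at least as much, by uncrossing. Throughout one maintains that the adhesion sets going up from distinct children of a node are disjoint and that no vertex lies in infinitely many separators along a common branch of the tree.

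The main obstacle is to steer this recursion so that it \emph{converges} for all the---possibly uncountably many---ends and vertices simultaneously: every undominated end must eventually be separated from every vertex, so that it is forced onto an end of the decomposition tree, while no separator may be spent in a way that fragments a dominated end, and every end of the decomposition tree must end up backed by a genuine undominated end of $G$, so that the resulting correspondence is a bijection. Carrying all of this out while keeping the separators finite, connected, and nested at limit stages---rather than any single clever device---is the technical heart of the argument.
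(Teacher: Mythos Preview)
The paper does not prove this statement at all: Theorem~\ref{thm: tdc displaying undom ends} is quoted as a result of Carmesin, cited via \cite[Theorem~2.17]{StarComb1StarsAndCombs}, and is used here as a black box (for instance in the proofs of Theorems~\ref{thm: tdc displaying undom ends in cl of U} and~\ref{thm: undominated comb johannes ii}). So there is no proof in the present paper to compare your proposal against.

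As to the proposal itself, what you have written is a plan rather than a proof. The normally spanned case you sketch is plausible, but the general case is precisely where the content lies, and there you only \emph{name} the difficulty rather than resolve it. The assertion that at each successor step one can insert a finite connected separator nested with all earlier ones ``by uncrossing'' is not justified: uncrossing two finite separations is standard, but maintaining nestedness with an arbitrary (possibly uncountable) family built over a transfinite recursion, while simultaneously ensuring that (a) every undominated end is eventually pushed to an end of the decomposition tree, (b) no dominated end is, (c) every branch of the tree corresponds to a genuine end of $G$, and (d) the separators stay finite, connected, and upwards disjoint, is exactly the substance of Carmesin's theorem. Your final paragraph concedes this (``the technical heart of the argument'') without supplying it. In short: the proposal correctly identifies the target and the obstacles, but the actual proof---the mechanism that makes the recursion converge with all the required properties---is missing.
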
 

\noindent Here, we recall from~\cite{StarComb1StarsAndCombs} that a tree-decomposition $(T,\cV)$ of a given graph $G$ with finite separators \emph{displays} a set $\Psi$ of ends of $G$ if $\tau$ restricts to a bijection $\tau\rest\Psi\colon\Psi\to\Omega(T)$ between $\Psi$ and the end space of $T$  and maps every end that is not contained in $\Psi$ to some node of $T$, where $\tau\colon\Omega(G)\to\Omega(T)\sqcup V(T)$ maps every end of $G$ to the end or node of $T$ which it corresponds to or lives at, respectively.

Call a connected graph $G$ \emph{\kn } if it has a tree-decomposition with pairwise disjoint finite and connected separators that displays the undominated ends of~$G$, i.e., if $G$ has a tree-decomposition as in Theorem~\ref{thm: tdc displaying undom ends} with the strengthening that all separators are pairwise disjoint.
Our aim in this section is twofold.
First, we prove Theorem~\ref{thm: reformulation special spanning tree} below which provides an existence criterion for spanning trees that reflect the undominated ends of a given graph. 
Second, we characterise in Theorem~\ref{thm: characterisation trees with finite fundamental cuts}~(i) the spanning trees of finitely separable graphs that reflect the undominated ends, and we establish in Theorem~\ref{thm: characterisation trees with finite fundamental cuts}~(ii) that every connected finitely separable graph has such a tree.

Our existence criterion for spanning trees that reflect the undominated ends can be formulated locally:

\begin{theorem}\label{thm: reformulation special spanning tree}
Let $G$ be any \kn\ graph and let $U\subseteq V(G)$ be normally spanned. Then there is a tree $T\subset G$ that contains $U$ and reflects the undominated ends in the closure of $U$. 
\end{theorem}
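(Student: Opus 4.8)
The plan is to mimic the structure of the proof of Theorem~\ref{thm: undominated comb duality}, but now carrying along the knobbly tree-decomposition of $G$ so that we can recover a spanning tree that is \emph{end-faithful} (indeed reflects) the undominated ends, rather than a tree that merely contains $U$. First I would dispose of the easy case: if $G$ contains no undominated comb attached to $U$, then by Theorem~\ref{thm: undominated comb duality} there is a rayless tree $T\subset G$ containing $U$, and a rayless tree trivially reflects the undominated ends in the closure of $U$ since there are none (a rayless tree containing $U$ witnesses, via Lemma~\ref{lemma:rayless_tree_contains_star_at_U}, that no undominated comb is attached to $U$, hence $\Abs{U}$ contains no undominated end). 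So the real work is the case where $G$ \emph{does} contain an undominated comb attached to $U$. As in the proof of Theorem~\ref{thm: undominated comb duality} we may, by Lemma~\ref{lemma: closure U equals closure T}, assume that $U=V(T_0)$ for a normal tree $T_0\subset G$.

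Next I would invoke that $G$ is knobbly: fix a tree-decomposition $(S,\cV)$ of $G$ with pairwise disjoint finite connected separators that displays the undominated ends of $G$. The key idea is to contract the parts of this decomposition (or rather, to contract along a carefully chosen rayless spanning forest of the non-leaf parts relative to the separators), obtaining a contraction minor $H$ of $G$ with fixed branch sets each inducing a subgraph with a rayless spanning tree, such that: (a) the undominated ends of $G$ survive in $H$ as precisely the ends living in the "direction" of the leaves/rays of $S$; (b) every end of $H$ in the closure of $[T_0]$ that does \emph{not} come from an undominated end is dominated in $H$ by a vertex of $[T_0]$; and (c) $T_0$ is properly passed on to $H$. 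One then applies a rayless-tree construction in $H$ as in Lemma~\ref{lemma: breadth first search tree}, but \emph{not all the way}: one builds a tree $T'\subset H$ containing $[U]$ that is rayless "except along the undominated directions", i.e.\ whose rays all map under $\tau$ to ends of $S$; equivalently, one runs the breadth-first construction separating off, for each undominated end $\omega\in\Abs U$, one normal ray of $[T_0]$ converging to it. Then pull $T'$ back through $H$ to a tree in $G$ via Lemma~\ref{lemma: rayless tree containing U if contraction minor has} (its proof plainly adapts: the branch-set spanning trees are rayless, so no new rays are introduced, and one ray per undominated end is preserved). Finally, one checks this pulled-back tree $T\subset G$ contains $U$ and that sending each of its rays to its end gives an injection into, and exactly hits, the undominated ends in $\Abs U$ — i.e.\ $T$ reflects them.

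The main obstacle I expect is item (b) combined with the simultaneous bookkeeping in (a): when we contract the parts of the knobbly decomposition we must ensure that the \emph{only} ends of $H$ lying in the closure of $[T_0]$ which fail to be dominated by $[T_0]$ are the ones corresponding to undominated ends of $G$ — dominated ends of $G$ must become dominated by a vertex of $[T_0]$ in $H$, and this requires rerouting domination fans through the contracted parts, exactly the delicate three-case analysis (the $R\in\mathcal R$ / $R\notin\mathcal R$ subcases) that appears at the end of the proof of Theorem~\ref{thm: undominated comb duality}, now layered on top of the separators of $(S,\cV)$. The secondary subtlety is checking end-faithfulness in the direction away from $U$: one must verify that the normal rays of $[T_0]$ that the construction keeps are in bijection (via Lemma~\ref{NormalTreeNormalRay} applied to $[T_0]$ in $H$) with the undominated ends in the closure of $U$, and that after pulling back through $G$ no two such rays end up in the same end of $G$ and no undominated end of $G$ in $\Abs U$ is missed — this uses that $(S,\cV)$ \emph{displays} the undominated ends, so distinct undominated ends are separated by the finite separators and hence remain distinct in $H$ and in $G$.
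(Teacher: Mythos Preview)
Your easy case is fine. But your main case takes a route that is both different from the paper's and has a genuine gap.

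The paper does \emph{not} rerun the proof of Theorem~\ref{thm: undominated comb duality} globally. Instead it uses Theorem~\ref{thm: undominated comb duality} as a black box, applied \emph{locally} inside each part of the \kn\ decomposition. Concretely: take a rooted tree-decomposition $(\dt,\cV)$ with pairwise disjoint finite connected separators that displays the undominated ends in $\Abs U$ and covers $U$ cofinally; enlarge $U$ to contain all separator vertices (this does not change $\Abs U$); fix a spanning tree $T_X$ of each separator $G[X]$; and then, for each node $t$, contract the separators at $t$ to dummy vertices to obtain a minor $H_t$ of $G[V_t]$ in which $[U_t]$ is normally spanned and every end in its closure is dominated (since $(\dt,\cV)$ displays the undominated ends). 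Theorem~\ref{thm: undominated comb duality} now gives a rayless tree in $H_t$, which expands to a rayless tree $T_t\subset G[V_t]$ extending all the $T_X$. The union $\bigcup_t T_t$, pruned to the down-closure of $U$, is the desired tree: its rays arise only by crossing infinitely many separators, i.e.\ along rays of $\dt$, so they correspond exactly to the undominated ends in $\Abs U$. No three-case fan analysis is repeated; it is hidden inside the invocation of Theorem~\ref{thm: undominated comb duality}.

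Your plan, by contrast, hinges on a single global contraction minor $H$ and a variant of Lemma~\ref{lemma: breadth first search tree} that produces a tree which is ``rayless except along the undominated directions'', with exactly one ray per undominated end in $\Abs U$. This variant is the crux, and you do not prove it. The breadth-first construction in Lemma~\ref{lemma: breadth first search tree} rules out \emph{every} ray by using a dominating vertex for its end; if some ends in the closure of $[U]$ are undominated, the argument simply does not apply to those rays, and nothing in the construction controls how many rays of the resulting tree converge to a given undominated end, or prevents rays from appearing that converge to dominated ends you failed to kill. Your description of $H$ is also circular: you want branch sets with rayless spanning trees, but for branch sets built from parts of $(S,\cV)$ that is exactly what Theorem~\ref{thm: undominated comb duality} provides---which is how the paper uses it, part by part. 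Finally, ``$T_0$ is properly passed on to $H$'' is unlikely to hold if you contract entire parts of the decomposition, since a part can meet $T_0$ in many vertices.

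In short: the correct idea is to let the \kn\ decomposition supply the rays and to use Theorem~\ref{thm: undominated comb duality} only where every relevant end is dominated, namely inside each part. Your global approach tries to reprove a strengthened Lemma~\ref{lemma: breadth first search tree} that you have not established.
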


\noindent Our proof of Theorem~\ref{thm: reformulation special spanning tree} requires some preparation. 

Recall that a rooted tree-decomposition $(T,\cV)$ of a graph $G$ \emph{covers} a vertex set $U\subset V(G)$ \emph{cofinally} if the set of nodes of $T$ whose parts meet $U$ is cofinal in the tree-order of $T$.
\begin{theorem}\label{thm: tdc displaying undom ends in cl of U}
Let $G$ be any \kn\ graph and let $U\subseteq V(G)$ be any vertex set. 
Then $G$ has a rooted tree-decomposition with pairwise disjoint finite connected separators that displays the undominated ends of $G$ that lie in the closure of $U$. Moreover, the tree-decomposition can be chosen so that it covers $U$ cofinally. 
\end{theorem}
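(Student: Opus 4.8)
The plan is to deduce Theorem~\ref{thm: tdc displaying undom ends in cl of U} from the global statement Theorem~\ref{thm: tdc displaying undom ends}, which gives a tree-decomposition $(T,\cV)$ of the whole graph $G$ with pairwise disjoint finite connected separators displaying \emph{all} undominated ends of $G$. The issue is twofold: such a decomposition displays more ends than we want (namely all undominated ends, not just those in $\closure U$), and it need not cover $U$ cofinally. Both defects should be repairable by pruning and contracting the decomposition tree. First I would deal with the cofinality requirement: let $S\subseteq V(T)$ be the set of nodes whose parts meet $U$, and let $T'$ be the smallest subtree of $T$ containing $S$ together with the root, i.e.\ the down-closure of $S$ in $T$. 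Contract each component of $T-T'$ into the $T'$-node it hangs off, absorbing the corresponding parts; since separators are finite and connected, the new parts are still connected and the adhesion sets between surviving nodes are unchanged, so we again have a tree-decomposition with pairwise disjoint finite connected separators. Now every leaf and every infinite branch of $T'$ must reach into $S$ (else it could have been pruned), so $S$ is cofinal in $T'$ and the decomposition covers $U$ cofinally.

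Next I would check which undominated ends are still displayed after the contraction. An undominated end $\omega$ of $G$ that lived at an end of $T$ is now mapped via $\tau$ either to an end of $T'$ (if its ray in $T$ stays inside $T'$) or to a node of $T'$ (if its ray eventually leaves $T'$ into a contracted branch). The key point to verify is that the undominated ends lying in $\closure U$ are exactly the ones whose $T$-rays stay in $T'$. One direction: if $\omega\in\closure U$, then by the star--comb lemma a comb attached to $U$ has spine in $\omega$, its teeth lie in $U$ hence in parts indexed by $S$, and since the adhesion sets are finite the ray in $T$ corresponding to $\omega$ must meet $S$ cofinally — in particular it never leaves $T'$ — so $\omega$ remains displayed as an end of $T'$. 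The other direction: if $\omega$ is undominated and its ray in $T$ leaves $T'$, then a tail of $\omega$ is separated from $U$ by one of the (finite) adhesion sets, so $\omega\notin\closure U$; such $\omega$ is correctly mapped by $\tau$ to a node of $T'$. Conversely any end of $T'$ is the image of some undominated end of $G$ displayed by the original decomposition, and by the previous argument that end lies in $\closure U$. Thus $(T',\cV')$ displays precisely the undominated ends of $G$ in $\closure U$ and covers $U$ cofinally.

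Finally I should double-check that the pruning doesn't create new undominated-but-undisplayed ends \emph{inside} $\closure U$: every end of $G$ in $\closure U$ is either dominated (fine, it is allowed to map to a node) or undominated, and in the latter case the argument above shows it is displayed. Ends not in $\closure U$ impose no requirement. The main obstacle I anticipate is the bookkeeping around $\tau$ after contracting branches of $T$ — one must confirm that contracting a branch into its attachment node sends exactly the ends living in that branch to that node, and sends the ends of $T$ that avoid the contracted branches to honest ends of $T'$, without disturbing the bijection $\tau\rest\Psi$ on the surviving displayed ends. This is the kind of fact that follows from the definition of "displays" together with the observation that a contracted branch of $T$ corresponds, on the graph side, to a union of parts separated from the rest of $G$ by a single finite connected adhesion set; I would isolate this as the one step needing care and treat the rest (connectedness and finiteness of the new separators, cofinality of $S$ in $T'$) as routine.
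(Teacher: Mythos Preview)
Your approach is essentially the paper's own: start from the tree-decomposition with pairwise disjoint finite connected separators displaying all undominated ends, pass to the down-closure $T'$ of the nodes whose parts meet $U$, and observe that the restricted decomposition displays exactly the undominated ends in $\closure U$ and covers $U$ cofinally. One correction: you should invoke the \emph{\kn} hypothesis directly rather than Theorem~\ref{thm: tdc displaying undom ends}, since the latter only guarantees \emph{upwards} disjoint separators, whereas both the statement you are proving and your argument need them pairwise disjoint---this is precisely what ``\kn'' supplies by definition.
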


\begin{proof}
Since $G$ is \kn , we find a tree-decomposition $(T,\cV)$ of $G$ with pairwise disjoint finite connected separators that displays the undominated ends of $G$. Consider $T$ rooted in an arbitrary node. 
Let $U'$ be the set of vertices of $T$ whose parts meet $U$ and let $T'$ be the subtree of $T$ obtained by taking the down-closure of $U'$ in~$T$. 
Then we let $(T,\alpha)$ be the $\Sinf$-tree corresponding to $(T,\cV)$, so $(T',\alpha\rest\vE(T')\,)$ is an $\Sinf$-tree that induces the desired tree-decomposition.
\end{proof}

Our construction of a tree reflecting the undominated ends in the closure of a given set of vertices will employ a contraction minor $H$ of the underlying graph $G$. 
The following notation will help us to translate between the endspace of $G$ and that of $H$.
Consider a contraction minor $H$ of a graph $G$ with fixed finite branch sets.
Every direction $f$ of $G$ defines a direction $[f]$ of $H$ by letting $[f](X):=[f(\bigcup X)]$  for every finite vertex set $X\subseteq V(H)$.
In fact, it its straightforward to check that every direction of $H$ is defined by a direction of $G$ in this way:

\begin{lemma}
Let $H$ be a contraction minor of a graph $G$ with fixed finite branch sets.
Then the map $f\mapsto [f]$ is a bijection between the directions of $G$ and the directions of $H$.\qed
\end{lemma}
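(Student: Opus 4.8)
The plan is to verify the three defining features of a bijection by hand: that each $[f]$ is genuinely a direction of $H$ (well-definedness), that $f\mapsto[f]$ is injective, and that it is surjective. Throughout, for a finite set $X\subseteq V(H)$ write $\bigcup X$ for the union of the branch sets indexed by the elements of $X$; since the branch sets are finite, $\bigcup X$ is a finite set of vertices of $G$, and since the branch sets partition $V(G)$ one has $[\bigcup X]=X$. For well-definedness, fix a direction $f$ of $G$ and a finite $X\subseteq V(H)$, and consider the component $C:=f(\bigcup X)$ of $G-\bigcup X$. One checks that $[C]$ is disjoint from $X$ (a branch set meeting $\bigcup X$ is one of the $V_x$ with $x\in X$), that $[C]$ is connected in $H$ (it is the image of the connected graph $C$ under the contraction), and that no edge of $H-X$ leaves $[C]$ (a branch set disjoint from $\bigcup X$ is connected, so if it meets $C$ it lies in $C$); hence $[C]$ is a component of $H-X$. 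Consistency of $[f]$ under $X\subseteq X'$ is immediate from $\bigcup X\subseteq\bigcup X'$ and consistency of $f$. So $[f]$ is a direction of $H$, and the use of the finiteness hypothesis is exactly in making $\bigcup X$ finite.

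For injectivity, suppose $f\neq g$ are directions of $G$, and pick a finite $Y\subseteq V(G)$ with $f(Y)\neq g(Y)$. Put $X:=[Y]$, so that $\bigcup X\supseteq Y$. By consistency, $f(\bigcup X)\subseteq f(Y)$ and $g(\bigcup X)\subseteq g(Y)$, so $f(\bigcup X)$ and $g(\bigcup X)$ are \emph{distinct} components of $G-\bigcup X$. Since every branch set is connected and either contained in $\bigcup X$ or disjoint from it, no branch set meets two distinct components of $G-\bigcup X$; therefore contracting cannot identify $f(\bigcup X)$ with $g(\bigcup X)$, and $[f](X)=[f(\bigcup X)]\neq[g(\bigcup X)]=[g](X)$.

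Surjectivity is where the work lies, and is the step I expect to be the main obstacle. Given a direction $g$ of $H$, I would build a candidate direction $f$ of $G$ as follows: for a finite $Y\subseteq V(G)$ let $Z_Y:=\bigcup g([Y])$ be the union of the finitely many finite branch sets forming the component $g([Y])$ of $H-[Y]$; this $Z_Y$ is connected in $G$, because the branch sets are connected and $g([Y])$ is connected in $H$, and it is disjoint from $Y$, so it lies in a unique component of $G-Y$, which I declare to be $f(Y)$. One then checks that $f$ is consistent: for $Y\subseteq Y'$ one has $[Y]\subseteq[Y']$, hence $Z_{Y'}\subseteq Z_Y\subseteq f(Y)$ and also $Z_{Y'}\subseteq f(Y')$, and since $Z_{Y'}\neq\emptyset$ this forces $f(Y')\subseteq f(Y)$; so $f$ is a direction of $G$. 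Finally one verifies $[f]=g$: for finite $X\subseteq V(H)$ one has $[\bigcup X]=X$, so $f(\bigcup X)$ is the component of $G-\bigcup X$ containing $Z_{\bigcup X}=\bigcup g(X)$; hence $V(g(X))\subseteq V([f(\bigcup X)])=V([f](X))$, and since $g(X)$ and $[f](X)$ are both components of $H-X$ they coincide. The only genuinely delicate point is this last bookkeeping — keeping straight the passage between $G-Y$, $H-[Y]$ and the contraction map, and confirming that the ``closure'' $Y\mapsto\bigcup[Y]$ does not change which component $g$ selects — while everything else is routine. (Alternatively one could invoke the correspondence between directions and ends together with known facts about ends of contraction minors with finite branch sets, but the direct argument above is shorter and self-contained.)
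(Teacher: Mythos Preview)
Your argument is correct and is exactly the kind of direct verification the paper has in mind: the paper states the lemma without proof, marking it with a \qed\ after the remark that ``it is straightforward to check that every direction of $H$ is defined by a direction of $G$ in this way''. One small slip to fix: in the surjectivity step you describe $Z_Y=\bigcup g([Y])$ as the union of ``finitely many'' branch sets, but the component $g([Y])$ of $H-[Y]$ is infinite (indeed, any component in the image of a direction is infinite), so $Z_Y$ is an infinite union of finite branch sets. This does not affect your proof at all, since you never use finiteness of $Z_Y$---only that it is connected (which follows from connectedness of branch sets and of $g([Y])$ in $H$) and disjoint from $Y$---but the phrasing should be corrected.
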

\noindent This one-to-one correspondence then combines with the well-known one-to-one correspondence between the directions and ends of a graph (see \cite[Theorem~2.7]{StarComb1StarsAndCombs}), giving rise to a bijection $\omega\mapsto [\omega]$ between the ends of $G$ and the ends of~$H$.
The natural one-to-one correspondence between the two end spaces extends to other aspects of the graphs and their ends:

\begin{lemma}\label{lemma: end space equivalence contraction minor}
Let $H$ be a contraction minor of a graph $G$ with fixed finite branch sets, let $\omega$ be an end of $G$ and let $U\subset V(G)$ be any vertex set.
Then $\omega$ lies in the closure of $U$ in $G$ if and only if $[\omega]$ lies in the closure of $[U]$ in $H$; and $\omega$ is dominated in $G$ if and only if $[\omega]$ is dominated in $H$.
\end{lemma}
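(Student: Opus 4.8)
The plan is to work throughout with the bijection $\omega\mapsto[\omega]$ between ends of $G$ and ends of $H$ already established above, and to phrase everything in terms of the corresponding directions $f$ and $[f]$, where $[f](X)=[f(\bigcup X)]$ for finite $X\subseteq V(H)$. Since branch sets are finite, the key ``dictionary'' facts I will use repeatedly are: for a finite vertex set $X\subseteq V(H)$ the set $\bigcup X\subseteq V(G)$ is finite; conversely for a finite $Y\subseteq V(G)$ the set $[Y]\subseteq V(H)$ is finite and satisfies $Y\subseteq\bigcup[Y]$; and for a component $C$ of $G-\bigcup X$, contracting the branch sets turns $C$ into (a subgraph spanning) the component $[C]$ of $H-X$, and every vertex of $[C]$ is a branch set meeting $C$ or lying inside $C$. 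These translations are routine but I will state them once explicitly.

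First I would prove the statement about closures. Recall $\omega$ lies in the closure of $U$ in $G$ iff for every finite $Y\subseteq V(G)$ the component $C(Y,\omega)$ meets $U$. Suppose this holds, and let $X\subseteq V(H)$ be finite; put $Y:=\bigcup X$, a finite subset of $V(G)$. Pick $u\in U\cap C(Y,\omega)$. Then $[u]\in[U]$, and since $[C(Y,\omega)]$ is the component of $H-X$ containing $[\omega]$ (this is exactly how $[f]$ was defined), $[u]$ lies in $C(X,[\omega])$; hence $[\omega]$ lies in the closure of $[U]$. For the converse, suppose $[\omega]$ lies in the closure of $[U]$ in $H$ and let $Y\subseteq V(G)$ be finite. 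Put $X:=[Y]$, a finite subset of $V(H)$. There is some $[u]\in[U]\cap C(X,[\omega])$ with $u\in U$. Now $Y\subseteq\bigcup X$, so $C(Y,\omega)\supseteq C(\bigcup X,\omega)$; and the branch set $[u]$ is connected in $H-X$ to $[\omega]$, which, lifting back to $G$, means the connected set $[u]\subseteq V(G)$ is joined to a ray in $\omega$ by a path avoiding $\bigcup X$. Hence $[u]$, and in particular $u$, lies in $C(\bigcup X,\omega)\subseteq C(Y,\omega)$, so $C(Y,\omega)$ meets $U$. Therefore $\omega$ lies in the closure of $U$.

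Next I would prove the statement about domination, which is the part I expect to require the most care, precisely because a single vertex of $H$ is a whole (finite) branch set of $G$. For the easy direction, if some vertex $v\in V(G)$ dominates $\omega$, then there is an infinite $v$--$R$ fan for some ray $R\in\omega$ with all paths internally disjoint; contracting branch sets, these paths project to walks from $[v]$ to $[R]\in[\omega]$, and since only finitely many branch sets are met in total along any fixed path and branch sets are finite, a standard thinning-out argument extracts an infinite $[v]$--$[\omega]$ fan in $H$, so $[\omega]$ is dominated in $H$. (Alternatively, and more cleanly: a vertex dominates an end iff that end lies in the closure of that vertex's neighbourhood-ball structure — but the fan argument is self-contained.) For the converse, suppose $[\omega]$ is dominated in $H$ by a vertex $W\in V(H)$, so there is an infinite $W$--$[\omega]$ fan, i.e.\ infinitely many internally disjoint $W$--$[R']$ paths in $H$ for some ray $R'$ of $H$ with $R'\in[\omega]$. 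Lifting each such path back to $G$ yields a path that starts inside the finite branch set $W$, traverses a sequence of branch sets, and ends on a lift of $R'$, hence ends at a vertex lying in some branch set met by $\omega$. Since the fan is infinite, infinitely many of these lifted paths start at the \emph{same} vertex $v\in W$ (as $W$ is finite), and after deleting their common initial segment inside $W$ we obtain infinitely many $v$--(something in $\omega$) paths; a further pigeonhole/thinning, using finiteness of branch sets and internal disjointness of the original fan, makes them internally disjoint, producing an infinite $v$--$R$ fan for a ray $R\in\omega$. Thus $v$ dominates $\omega$ in $G$. The one genuine subtlety here is ensuring that ``$[R']\in[\omega]$'' really does lift to a ray of $G$ in $\omega$: this follows from the bijection $\omega\mapsto[\omega]$ together with the observation that $[\omega]$ has a ray in $[R']$ iff $R'$ lifts into $\omega$, which I would justify by passing through directions exactly as in the paragraph preceding the lemma. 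I expect this lifting-and-thinning bookkeeping to be the main obstacle; everything else is the finite-branch-set dictionary applied mechanically.
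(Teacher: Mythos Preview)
Your closure argument is essentially the paper's: both unfold ``$\omega$ lies in the closure of $U$'' as ``$f_\omega(X)$ meets $U$ for every finite $X$'' and translate back and forth via $Y\mapsto[Y]$ and $X\mapsto\bigcup X$. (One cosmetic slip: when you pick $[u]\in[U]\cap C(X,[\omega])$, you should say ``choose $u\in U$ with branch set equal to this vertex of $H$'', not assume the representative $u$ you started with lies in $U$.)

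For domination, however, you and the paper diverge. You argue with fans: contract a $v$--$R$ fan to get a $[v]$--$[\omega]$ fan, and conversely lift a $W$--$[\omega]$ fan, pigeonhole on the finite branch set $W$ to pin down a starting vertex $v$, and thin out. This can be made to work, but the bookkeeping you yourself flag as ``the main obstacle'' is real (making the lifted paths genuinely internally disjoint, ensuring their endpoints on a fixed ray $R\in\omega$, etc.), and your phrase ``after deleting their common initial segment inside $W$'' is muddled---once pigeonhole gives you a common first vertex $v\in W$, the lifts can be taken to leave $W$ immediately, so there is no initial segment to delete. The paper sidesteps all of this by staying with directions also for domination: it uses the characterisation that a vertex $z$ dominates $\omega$ iff $z\in f_\omega(X)$ for every finite $X\not\ni z$, and proves the nontrivial implication (that $[v]$ dominating $[\omega]$ forces some $z\in[v]$ to dominate $\omega$) by contraposition: if each $z\in[v]$ is separated from $\omega$ by some finite $X_z$, then the single finite set $X:=\bigcup_{z\in[v]}X_z$ separates the whole branch set $[v]$ from $f_\omega(X)$ and from its neighbourhood, whence $[v]\notin[f_\omega]([X'])$ for $X'=N(f_\omega(X))$. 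This is a two-line argument with no lifting or thinning at all. Your fan route buys concreteness; the paper's direction route buys a proof that is dramatically shorter and free of the subtlety you anticipated.
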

\noindent We remark that this extends \cite[Exercise~82~(i)]{DiestelBook5}.
\begin{proof}
Write $f_\omega$ for the direction of $G$ that corresponds to $\omega$. Then the following statements are equivalent: 
\begin{enumerate}
    \item $\omega$ lies in the closure of $U$ in $G$;
    \item  $f_\omega(X)$ meets $U$ for every finite vertex set $X\subset V(G)$;
    \item  $[f_\omega](X)$ meets $[U]$ for every finite vertex set $X\subseteq V(H)$;
    \item $[\omega]$ lies in the closure of $[U]$ in $H$.
\end{enumerate}
\noindent Indeed, one easily verifies  (i)$\leftrightarrow$(ii)$\leftrightarrow$(iii)$\leftrightarrow$(iv).

This establishes that the end~$\omega$ of $G$ lies in the closure of $U$ in $G$ if and only if $[\omega]$ lies in the closure of $[U]$ in~$H$. Similarly, it is straightforward to check that the following statements are equivalent for any vertex $v$ of $G$ (except for (iii)$\to$(ii) which we will verify in detail):
\begin{enumerate}
    \item there is a vertex $z\in[v] $ that dominates $\omega$ in $G$;
    \item there is a vertex $z\in [v]$ such that $z\in f_\omega(X)$ for every finite vertex set $X\subseteq V(G)\setminus \{z\}$;
    \item $[v]\in [f_\omega](X)$ for every finite vertex set $X\subseteq V(H)\setminus \{[v]\}$;
    \item $[v]$ dominates $[\omega]$ in $H$.
\end{enumerate}
\noindent 
To see (iii)$\to$(ii) we show $\neg$(ii)$\,{\to}\,{\neg}$(iii).
Since (ii) fails, there is for every vertex $z\in [v]$ a finite vertex set $X_z\subset V(G)\setminus\{z\}$ such that $z$ is not contained in $f_\omega(X_z)$.
Consider the finite vertex set $X:=\bigcup_z X_z$.
Then no $z\in [v]$ is contained in the component $f_\omega(X)$ or is one of its neighbours, because $f_\omega(X)\subset f_\omega(X_z)$ and $z\notin X_z\cup f_\omega(X_z)$.
Hence $[v]\notin [f_\omega]([X'])$ for the neighbourhood $X'$ of $f_\omega(X)$ in $G$ and this neighbourhood avoids~$[v]$.

Therefore the end $\omega$ of $G$ is dominated in $G$ if and only if $[\omega]$ is dominated in~$H$.
\end{proof}

\begin{lemma}\label{GraphsWithNSTminorClosed}
Let $H$ be a contraction minor of a graph $G$ with fixed branch sets and let $U\subset V(G)$ be any vertex set. If $U$ is normally spanned in $G$, then $[U]$ is normally spanned in $H$.
\end{lemma}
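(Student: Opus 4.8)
The plan is to reduce the statement to Jung's characterisation of normally spanned sets, Theorem~\ref{thm: NT and dispersed sets}, via the observation that the property of being \emph{dispersed} is preserved under passing a vertex set on to a contraction minor (the branch sets of a contraction minor being connected).

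First I would invoke Theorem~\ref{thm: NT and dispersed sets} to write $U=\bigcup_{n\in\N}D_n$ as a countable union of sets $D_n$ that are dispersed in $G$. Since $[U]=\bigcup_{n\in\N}[D_n]$, it suffices by Theorem~\ref{thm: NT and dispersed sets} applied in $H$ to show that each $[D_n]$ is dispersed in $H$. Thus the whole lemma follows once we establish the claim: \emph{if $D\subseteq V(G)$ is dispersed in $G$, then $[D]$ is dispersed in $H$.}

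To prove the claim I would argue by contraposition, so suppose $[D]$ is not dispersed in $H$. Then some end $\eta$ of $H$ lies in the closure of $[D]$, and hence — by the standard correspondence between the ends in the closure of a vertex set and the combs attached to it, cf.~\cite{StarComb1StarsAndCombs} — there is a comb $C\subseteq H$ all of whose teeth lie in $[D]$. I would then lift $C$ to a comb in $G$ with all its teeth in $D$, which shows that $D$ is not dispersed in $G$. The lift is obtained as follows. The spine of $C$ is a ray $W_0W_1W_2\dots$ of pairwise disjoint connected branch sets; choosing for each $i$ an edge of $G$ between $V_{W_i}$ and $V_{W_{i+1}}$ and joining the resulting finitely many ``ports'' inside each connected graph $G[V_{W_i}]$ by a finite path yields a ray $R$ of $G$, where disjointness is automatic since distinct branch sets are disjoint. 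Each tooth-path of $C$ is a finite path of branch sets that is disjoint from the other tooth-paths and meets the spine only in its first node; lifting it in the same way, and at its last node $W$ routing it through a fixed vertex of $V_W\cap D$ (nonempty as $W\in[D]$), produces a finite path of $G$ from $R$ to a vertex of $D$ — and since the only branch set it shares with the spine or with the other lifted paths is the single spine branch set in which it starts, the connecting routes can be kept pairwise disjoint. The union of $R$ with these finite paths is a comb in $G$ with all teeth in $D$, as required. Finally, chaining the claim with the two applications of Theorem~\ref{thm: NT and dispersed sets} shows that $[U]$ is a countable union of sets dispersed in $H$, hence normally spanned in $H$.

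The main point requiring care is the comb-lifting step, i.e.\ verifying that the finite paths lifting the teeth-paths can be chosen pairwise disjoint and internally disjoint from the lifted spine. This is routine precisely because the branch sets are pairwise disjoint and connected: the teeth-paths of $C$ use pairwise disjoint sets of branch sets and each meets the spine's branch sets only in the one where it starts, so the only interference occurs inside a single branch set carrying both a piece of the spine and the start of exactly one tooth-path, where the connecting path may be chosen freely.
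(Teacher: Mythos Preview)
Your proposal is correct and follows essentially the same route as the paper: both invoke Jung's Theorem~\ref{thm: NT and dispersed sets} to reduce to showing that dispersed sets pass on to dispersed sets under contraction, and both establish this by lifting a hypothetical comb attached to $[D]$ in $H$ back to a comb attached to $D$ in~$G$. The paper compresses the comb-lifting into a single sentence, whereas you spell out the routine details; your final paragraph correctly identifies the only point of care (the interaction inside the unique spine branch set where a tooth-path begins), and your handling of it is sound.
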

\noindent We remark that this is essentially \cite[Lemma~7.2~(b)]{halin2000}.

\begin{proof} Without loss of generality both $G$ and $H$ are connected.
By Theorem~\ref{thm: NT and dispersed sets}, we have that $U$ can be written as a countable union $\bigcup_{n\in\N} U_n$ with every $U_n$ dispersed in $G$. 
Then every vertex set $[U_n]$ is dispersed in $H$, because every comb \at $[U_n]$ in $H$ would give rise to a comb \at $U_n$ in $G$, contradicting that $U_n$ is dispersed in $G$.
Hence $[U]=\bigcup_{n\in\N}[U_n]$ is normally spanned in $H$ by Theorem~\ref{thm: NT and dispersed sets}.
\end{proof}

We need one more lemma for the proof of Theorem~\ref{thm: reformulation special spanning tree}:
\begin{lemma}\label{lemma: Abs U equals Abs U plus separators of tdc}
Let $G$ be any connected graph and let $U\subseteq V(G)$ be any vertex set. If $(T,\cV)$ is a rooted tree-decomposition of $G$ with pairwise disjoint finite connected separators that displays the undominated ends in $\Abs{U}$ and covers~$U$ cofinally, then $\Abs{U}=\Abs{\hat{U}}$ for the superset $\hat{U}$ of $U$ that arises from $U$ by adding all the vertices that lie in the separators of $(T,\cV)$. 
\end{lemma}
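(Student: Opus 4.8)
I want to show $\Abs{U}=\Abs{\hat U}$ where $\hat U$ adds to $U$ all vertices lying in separators of the tree-decomposition $(T,\cV)$. The inclusion $\Abs{U}\subseteq\Abs{\hat U}$ is immediate since $U\subseteq\hat U$, so the work is the reverse inclusion: every end $\omega$ in the closure of $\hat U$ already lies in the closure of $U$. There are two cases according to the behaviour of $\omega$ with respect to the tree-decomposition, i.e. according to the node or end of $T$ that $\tau(\omega)$ equals.

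\textbf{Main case split.} First suppose $\omega$ is undominated. Then, since $(T,\cV)$ displays the undominated ends in $\Abs U$, either $\omega\notin\Abs U$ — but then $\omega$ is mapped to a node $t$ of $T$, and I must rule out $\omega\in\Abs{\hat U}$; or $\omega\in\Abs U$ and we are done. To handle the first sub-case: if $\omega$ is an undominated end not in $\Abs U$, then $\tau(\omega)=t\in V(T)$, meaning $\omega$ lives at the part $V_t$; in particular for the (finite) separator between $t$ and any neighbour, $\omega$ is captured, and the component $C(X,\omega)$ for suitable finite $X$ (a union of the finitely many separators at $t$, together with more) is eventually contained in the part $V_t$ minus the separators. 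Since the separators are pairwise disjoint and finite and $\hat U\setminus U$ consists exactly of separator vertices, I can push $X$ out far enough that $C(X,\omega)$ meets neither $U$ (because $\omega\notin\Abs U$) nor any separator vertex (because $\omega$ living at a node means it is separated from all but finitely many separators) — contradicting $\omega\in\Abs{\hat U}$. Hence every undominated end in $\Abs{\hat U}$ lies in $\Abs U$.

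Second, suppose $\omega$ is dominated, say by a vertex $v$. I claim $v$ can be taken in the closure-relevant region so that $\omega\in\Abs U$ follows from $\omega\in\Abs{\hat U}$. Here the cleanest route is to argue as in Lemma~\ref{lemma: Abs U plus dominating vts equals Abs U}: fix a comb attached to $\hat U$ with spine in $\omega$; for a given finite $X\in\cX$ pick a tooth $w$ of the comb in $C(X,\omega)$. If $w\in U$ we are done for this $X$. Otherwise $w$ is a separator vertex, lying in the (finite, connected) separator $S_e$ of some edge $e$ of $T$. Since the tree-decomposition covers $U$ cofinally and $\omega$ is dominated, the node $\tau(\omega)$ lies in $T$, and one side of $e$ (the side not containing $\tau(\omega)$, or else both sides if $\tau(\omega)$ lies on $e$) has its part-union containing vertices of $U$ arbitrarily far out; more precisely, since every end living beyond $e$ towards the $\tau(\omega)$-free side that is undominated is displayed and the separators shrink the relevant component, and $U$ is covered cofinally, the component $C(X,\omega)$ must contain a vertex of $U$. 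I would spell this out by replacing the single vertex $w$ with a short path in $G$ from $w$ towards $\omega$ that either hits $U$ inside $C(X,\omega)$ or witnesses a nested sequence of separators forcing a limit contradiction with cofinal coverage.

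\textbf{Expected main obstacle.} The delicate point is the dominated case: turning "the tree-decomposition covers $U$ cofinally" into "$C(X,\omega)$ meets $U$" when the only tooth we found lies in a separator rather than in $U$ itself. The right tool is surely the interaction between cofinal coverage of $U$, the fact that separators are finite and pairwise disjoint (so no end can live "inside" infinitely many of them), and the displaying property restricted to $\Abs U$. I expect the argument to run: either the comb's teeth in $C(X,\omega)$ eventually include a genuine $U$-vertex (done), or all teeth from some point on are separator vertices lying in pairwise disjoint separators $S_{e_1},S_{e_2},\dots$ all meeting $C(X,\omega)$; but a comb attached to $\hat U$ with spine in $\omega$ together with cofinal $U$-coverage then forces $\omega$ to lie in the closure of the part-sequence, hence $\tau(\omega)$ is an end of $T$, hence $\omega\in\Abs U$ by the displaying property (for $\omega$ dominated this needs care, since displaying only pins down undominated ends — so actually in the dominated case I should instead directly exhibit a $U$-vertex: cofinal coverage gives a node $t$ with $V_t\cap U\neq\emptyset$ arbitrarily high above $\tau(\omega)$ in $T$, and connectedness of $G$ routes a path from such a $U$-vertex into $C(X,\omega)$ through the separators, which lie in $\hat U$ but whose finiteness lets us enlarge $X$ to avoid circularity). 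This last routing step, done carefully, is the crux, and I would isolate it as the one place deserving a full paragraph of detail rather than a "straightforward to check."
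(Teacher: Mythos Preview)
Your case split into dominated vs.\ undominated ends is unnecessary and leads you astray. Recall what ``displays $\Psi$'' means: $\tau$ is a bijection from $\Psi$ onto $\Omega(T)$ \emph{and} sends every end outside $\Psi$ to a node of $T$. Here $\Psi$ is the set of undominated ends in $\Abs U$, so any end $\omega\notin\Abs U$---dominated or not---automatically satisfies $\tau(\omega)=t\in V(T)$. There is no separate dominated case to handle.

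More seriously, your argument in the undominated sub-case contains a genuine gap. You assert that ``$\omega$ living at a node means it is separated from all but finitely many separators,'' so that for large enough finite $X$ the component $C(X,\omega)$ avoids all separator vertices. This is false in general: if $t$ has infinitely many upward neighbours $s_1,s_2,\dots$ in $T$, the separators $S_{ts_i}\subset V_t$ may well accumulate at $\omega$, and then no finite $X$ separates $\omega$ from all of them simultaneously. This scenario is exactly what one must rule out, so it cannot be assumed away.

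The paper's proof proceeds uniformly. Assuming $\omega\in\Abs{\hat U}\setminus\Abs U$, one has $\tau(\omega)=t$ for some node $t$. Pick a comb attached to $\hat U$ with spine in $\omega$; since $\omega\notin\Abs U$ its teeth may be taken outside $U$, hence in separators, and (as these are finite and pairwise disjoint) in pairwise distinct separators associated with edges at and above $t$. Now the two hypotheses you never combined do the work: because the separators are \emph{connected} and $(T,\cV)$ covers $U$ \emph{cofinally}, from each tooth one can route a path (through separators on the far side of its edge from $t$) to a vertex of $U$, and the pairwise disjointness of separators makes these paths disjoint. The extended comb is then attached to $U$, contradicting $\omega\notin\Abs U$. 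Your ``expected main obstacle'' paragraph gropes toward this idea but entangles it with the irrelevant dominated/undominated dichotomy.
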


\begin{proof}
 The inclusion $\Abs{U}\subset \Abs{\hat{U}}$ holds because $U\subseteq \hat{U}$. For the backward inclusion, consider any end $\omega$ in the closure of $\hat{U}$, and assume for a contradiction that $\omega$ does not lie in the closure of $U$.
 Then $\omega$ lives at a node $t\in T$ because $(T,\cV)$ displays the ends in the closure of $U$.
 Pick a comb in $G$ \at ~$\hat{U}$ and with spine in $\omega$.
 As $\omega$ does not lie in the closure of $U$ we may assume that the comb avoids $U$.
Furthermore, we may assume that every tooth of the comb lies in a separator of $(T,\cV)$ associated with an edge of $T$ at and above $t$.
 Since the separators of $(T,\cV)$ are finite and pairwise disjoint, we may even ensure that no separator contains more than one tooth. 
 As $(T,\cV)$ has connected separators and covers $U$ cofinally, we find infinitely many disjoint paths from the comb to $U$, one starting in each tooth.
 Then the comb together with these paths witnesses that $\omega$ lies in the closure of $U$, a contradiction.
\end{proof}

\begin{proof}[Proof of Theorem~\ref{thm: reformulation special spanning tree}]
Let $G$ be any \kn\ graph and let $U\subseteq V(G)$ be normally spanned.
Let $(\dt,\cV)$ be any rooted tree-decomposition of $G$ with pairwise disjoint finite connected separators such that $(\dt,\cV)$ displays the undominated ends in the closure of $U$ and covers $U$ cofinally. 
And by Lemma~\ref{lemma: Abs U equals Abs U plus separators of tdc} we may assume that $U$ contains all the vertices that are contained in the separators of $(\dt,\cV)$.

We construct a tree $T\subset G$ displaying the undominated ends in the closure of $U$ as follows. 
For every separator $X$ of $(\dt,\cV)$ we pick a spanning tree $T_X$ of $G[X]$.
As all $X$ are finite and pairwise disjoint, so are the $T_X$.
Next, we choose for every part $V_t$ of $(\dt,\cV)$ a rayless tree $T_t$ in $G[V_t]$ containing $U_t:=V_t\cap U$ and extending all the trees $T_X$ for which $X$ is a separator corresponding to some edge incident with $t$, as follows. 
Given $V_t$, we first consider the contraction minor $H_t$ of $G[V_t]$ with fixed branch sets that is obtained from $G[V_t]$ by contracting each $G[X]$ with $X$ a separator induced by an edge of $\dt$ at $t$ to a single \emph{dummy} vertex named $X$.
As $U$ is normally spanned in $G$ it follows by Lemma~\ref{GraphsWithNSTminorClosed} that $[U]_{H}$ is normally spanned in the contraction minor $H$ obtained from $G$ by contracting every $G[X]$ for every separator.
It follows that the vertex sets $[U_t]_{H_t}$ are normally spanned in $H_t\subset H$. 
Furthermore, since $(\dt,\cV)$ has disjoint finite connected separators and displays the undominated ends of $G$ in the closure of $U$, every end of $G[V_t]$ in the closure of $U_t$ in the graph $G[V_t]$ is dominated in $G[V_t]$. Thus, by Lemma~\ref{lemma: end space equivalence contraction minor} every end  of $H_t$ in the closure of $[U_t]$ is dominated in $H_t$.
Hence we may apply Theorem~\ref{thm: undominated comb duality} to $H_t$ and $[U_t]$ to obtain a rayless tree $\tilde{T}_t$ in $H_t$ containing $[U_t]$. 
Then by expanding each dummy vertex $X$ of $\tilde{T}_t$ to $T_X$ we obtain a rayless tree $T_t$ in $G[V_t]$ that contains $U_t$ and extends all these~$T_X$.

Let $T$ be spanned by the down-closure of $U$ in the tree $\bigcup_{t\in \dt}T_t$ with regard to an arbitrary root. We claim that $T$ contains $U$ and reflects the undominated ends in the closure of $U$.
Clearly, $T$ is a tree in $G$ that contains $U$ even cofinally. 
By the star-comb lemma, every tree in $G$ containing $U$ contains for each undominated end in the closure of $U$ a ray from that end.
In particular, $T$ contains a ray from every undominated end in the closure of~$U$.

Next, the tree $T$ contains at most one ray starting in the root for every undominated end in the closure of $U$: Indeed, if $T$ contains two (say) vertex-disjoint rays from the same undominated end $\omega$ in the closure of $U$, then these give rise to a subdivided ladder in $T$ via the trees $T_X$ along any ray of $\dt$ to which $\omega$ corresponds, and the ladder comes with infinitely many cycles, contradicting that $T$ is a tree.

That $T$ contains only rays from ends in the closure of $U$ is a consequence of Lemma~\ref{lemma: closure U equals closure T} and the fact that $T$ contains $U$ cofinally by construction.

Finally, the tree $T$ contains no ray from dominated ends in the closure of $U$, for if $T$ contains a ray from such an end, then the vertex set of that ray intersects some part $V_t$ of $(T,\cV)$ infinitely often, and then Lemma~\ref{lemma:rayless_tree_contains_star_at_U} applied in the rayless tree $T_t$ to that intersection yields infinitely many cycles in the tree $T$.
\end{proof}

\begin{theorem}\label{thm: characterisation trees with finite fundamental cuts} Let $G$ be any graph and let $T\subseteq G$ be any spanning tree. \begin{enumerate}
    \item The fundamental cuts of $T$ are all finite if and only if $G$ is finitely separable and $T$ reflects the undominated ends of $G$.
    \item  If $G$ is finitely separable and connected, then it has a spanning tree all whose fundamental cuts are finite. 
\end{enumerate} 
\end{theorem}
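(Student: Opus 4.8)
The plan is to prove~(i) by a direct analysis of fundamental cuts, and then to deduce~(ii) by a construction modelled on the proof of Theorem~\ref{thm: reformulation special spanning tree}. For an edge $e=ab$ of the spanning tree $T$ write $(A_e,B_e)$ for the partition of $V(G)$ into the two components of $T-e$, with $a\in A_e$ and $b\in B_e$; then the fundamental cut $D_e$ is exactly the set of $A_e$--$B_e$ edges of $G$, and both $A_e$ and $B_e$ induce connected subgraphs of $G$ because they induce subtrees of $T$. When $D_e$ is finite let $S_e$ denote the finite set of endpoints of its edges; then $A_e\setminus S_e$ and $B_e\setminus S_e$ are unions of components of $G-S_e$. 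I will also use the elementary fact that two rays of a tree lie in the same end of that tree if and only if they have a common tail.

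For the forward direction of~(i), assume all fundamental cuts of $T$ are finite. Then $G$ is finitely separable, since any two vertices $u,v$ are separated by the finite cut $D_e$ of any edge $e$ on $uTv$. For the reflection property, note that each end $\omega$ of $G$ lies on exactly one side of every edge $e$ of $T$ (inside $A_e\setminus S_e$ or inside $B_e\setminus S_e$), and orienting every edge of $T$ towards that side yields a consistent orientation: no vertex $v$ can have two out-edges, as the two corresponding finite separators would trap $\omega$ inside two disjoint subtrees of $T$. If $\omega$ is undominated, this orientation cannot point at a vertex $t$ -- otherwise $\omega$ would visit infinitely many of the subtrees hanging off $t$, and routing each such visit back to $t$ through $T$ would yield an infinite fan from $t$ onto a ray in $\omega$ -- so the out-edges trace out a ray $R_\omega\subset T$; writing $V_1\supseteq V_2\supseteq\dots$ for the far sides of the edges of $R_\omega$ (a decreasing sequence of subtrees with finite edge boundary and empty intersection, each connected in $G$ and each trapping $\omega$), one checks that $R_\omega\in\omega$. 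Conversely every ray $R\subset T$ lies in an undominated end: a vertex dominating its end would lie either outside some far side $V_j$, whence infinitely many fan-paths cross the finite cut $D_{e_j}$, or inside every $V_i$, whence $\bigcap_i V_i\ne\emptyset$; both are impossible. Finally the map $\Omega(T)\to\Omega(G)$ is injective, since two rays of $T$ in distinct ends of $T$ have tails separated in $T$ by some edge $e$ and hence tails separated in $G$ by the finite set $S_e$. Thus $\Omega(T)\to\Omega(G)$ is a bijection onto the undominated ends of $G$, i.e.\ $T$ reflects the undominated ends of $G$.

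For the backward direction of~(i), assume $G$ is finitely separable and $T$ reflects the undominated ends, and suppose for contradiction that some $D_e$ is infinite. After disposing of the case that a single vertex has infinitely many neighbours across $D_e$, we may assume that $D_e$ has infinitely many distinct endpoints on each of the sides $A_e$ and $B_e$. Applying the star--comb lemma inside the subtree $A_e$ to those endpoints, and likewise inside $B_e$, and combining the resulting stars or combs with the $D_e$-edges joining them, produces: in the star/star case infinitely many internally disjoint paths between two vertices, contradicting finite separability; in the star/comb case an infinite fan from a vertex onto a ray of $T$, contradicting that this ray lies in an undominated end; and in the comb/comb case infinitely many disjoint paths between two \emph{disjoint} rays of $T$, which forces these two rays into a common end of $G$ (by tracking how a finite vertex set can meet them) while they lie in distinct ends of $T$, contradicting injectivity of $\Omega(T)\to\Omega(G)$. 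Hence all fundamental cuts of $T$ are finite, completing~(i).

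For~(ii), let $G$ be connected and finitely separable. Using Carmesin's Theorem~\ref{thm: tdc displaying undom ends}, together with finite separability to upgrade the separators from upwards disjoint to pairwise disjoint (so that $G$ becomes \kn), fix a rooted tree-decomposition $(\dt,\cV)$ of $G$ with pairwise disjoint finite connected separators displaying the undominated ends of $G$. Now imitate the proof of Theorem~\ref{thm: reformulation special spanning tree} with $U=V(G)$: choose a spanning tree $T_X$ of each separator $X$; for each part $V_t$ form the contraction minor $H_t$ of $G[V_t]$ obtained by contracting each incident separator to a dummy vertex, note that $H_t$ has no undominated comb at its whole vertex set because $(\dt,\cV)$ displays the undominated ends of $G$, apply Theorem~\ref{thm: undominated comb duality} to get a rayless spanning tree of $H_t$, and re-expand the dummy vertices via the $T_X$ to obtain a rayless tree $T_t\subset G[V_t]$ extending every incident $T_X$. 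Then $T:=\bigcup_{t\in\dt}T_t$ together with one $G$-edge across each edge of $\dt$ is a spanning tree of $G$, and all its fundamental cuts are finite by~(i): indeed $G$ is finitely separable, and $T$ reflects the undominated ends of $G$ because $T$ contains a ray from every undominated end (star--comb lemma, as $T$ spans $G$), at most one such ray per end (two would give a subdivided ladder, hence cycles, through the $T_X$), and no ray from any dominated end (such a ray would meet some rayless $T_t$ infinitely often, yielding cycles by Lemma~\ref{lemma:rayless_tree_contains_star_at_U}). The step I expect to be the main obstacle is the application of Theorem~\ref{thm: undominated comb duality} to the $H_t$: it requires the vertex set of $H_t$ to be normally spanned in $H_t$, and this is the one place where finite separability of $G$ must be put to work in earnest -- the corresponding point in Theorem~\ref{thm: reformulation special spanning tree} was handed to us by the hypothesis that $U$ is normally spanned. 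The natural route is to show that a connected finitely separable graph is normally spanned and that this survives passing to $G[V_t]$ and then to the contraction $H_t$, after which the whole of Theorem~\ref{thm: reformulation special spanning tree} applies directly with $U=V(G)$. The many sub-cases in the backward direction of~(i) are a secondary nuisance, but each is short once the star--comb reductions are set up.
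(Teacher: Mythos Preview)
Your treatment of part~(i) is correct and essentially matches the paper's. The forward direction is a little more elaborate than necessary---where you trace out $R_\omega$ via an orientation of $T$, the paper simply applies the star--comb lemma inside $T$ to the vertex set of any ray in $\omega$ (a star would dominate~$\omega$, so one gets a comb whose spine lies in $T$ and in~$\omega$)---but your backward direction, including the star/star, star/comb and comb/comb trichotomy, is exactly the paper's argument.

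Part~(ii), however, has a genuine gap, and the paper takes a completely different route. Your plan rests on two unproved claims: that finite separability lets you upgrade Carmesin's upwards-disjoint separators to pairwise disjoint ones (so that $G$ is \kn), and---more seriously, as you yourself flag---that a connected finitely separable graph is normally spanned. The second is not a known theorem, and there is no evident mechanism for it: finite edge-separability gives no direct handle on writing $V(G)$ as a countable union of dispersed sets, and even if it held for $G$ you would still need it for each torso $G[V_t]$, which need not inherit finite separability in any useful way once the separators are removed. Without these two ingredients your construction cannot get off the ground. (There is also a small slip in the final assembly: since each $T_t$ already extends the separator trees $T_X$ at the edges incident with~$t$, the union $\bigcup_t T_t$ is already connected, and adding ``one $G$-edge across each edge of~$\dt$'' would create cycles.)

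The paper avoids all of this by citing a result of Bruhn and Diestel: every connected finitely separable graph $G$ has a spanning tree $T$ whose closure in the space~$\tilde{G}$ contains no circle. A two-line check shows that such a $T$ reflects the undominated ends of~$G$---a ray of $T$ dominated by $v$ would lie on a ray $R\subset T$ starting at $v$ with $\overline{R}$ a circle, and two disjoint equivalent rays of $T$ would lie on a double ray $D\subset T$ with $\overline{D}$ a circle---after which part~(i) gives the finite fundamental cuts. This bypasses the normal-spanning-tree machinery entirely.
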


\begin{proof}
(i) For the forward implication suppose that the fundamental cuts of $T$ are all finite. 
First let us see that $G$ is finitely separable. 
For this consider any two distinct vertices $v,w\in V(G)$ and let $e$ be an edge on the unique path between $v$ and $w$ in $T$. Then the fundamental cut of $e$ with respect to $T$ is finite and separates $v$ from $w$ in $G$.

Next, let us show that no ray of $T$ is dominated. 
For this, consider any ray $R\subseteq T$ and any vertex $v\in V(G)$. Let $C$ be the component of $T-v$ that contains a tail of $R$ and let $e\in E(T)$ be the unique edge between $C$ and $v$. 
As the fundamental cut of $e$ with respect to $T$ is finite, and as all the paths of any $v$-$(R-v)$ fan need to pass through this fundamental cut, the vertex $v$ cannot dominate $R$.

The tree $T$ contains a ray from every undominated end, because, by the star-comb lemma, every spanning spanning tree of $G$ does so.
It remains to show that every distinct two ends of $T$ are included in distinct ends of $G$. 
For this consider rays $R, R'\subseteq T$ that belong to distinct ends of $T$. Let $e$ be an edge on a tail of $R$ that does not meet $R'$. 
Then the endvertices of the edges in the finite fundamental cut of $e$ form a finite vertex set that separates a tail of $R$ from a tail of $R'$ in $G$.
Hence $R$ and $R'$ belong to distinct ends of $G$.

For the backward implication suppose that $G$ is finitely separable and that $T$ reflects the undominated ends of $G$. Consider any fundamental cut $F_e$ of an edge $e\in E(T)$ with respect to $T$.
Write $T_1$ and $T_2$ for the two components of $T-e$. Then $F_e$ consists of the $T_1$--$T_2$ edges of $G$.
Suppose for a contradiction that $F_e$ is infinite.
Then $F_e$ has infinitely many endvertices in at least one of $T_1$ and $T_2$.
Let us write $X_i$ for the set of endvertices that $F_e$ has in $T_i$ for $i=1,2$.
We consider two cases and derive contradictions for both of them.

In the first case, some vertex $x\in X_i$ is incident with infinitely many edges of~$F_e$, say for $i=1$.
Then, as $G$ is finitely separable, applying the star-comb lemma in $T_2$ to the infinitely many endvertices that these edges have in $T_2$ must yield a comb whose spine is then dominated by $x$ in $G$, contradicting that $T$ reflects the undominated ends of $G$.

In the second case, every vertex of $G$ is incident with at most finitely many edges from $F_e$.
Then $F_e$ contains an infinite matching of an infinite subset of $V(T_1)$ and an infinite subset of $V(T_2)$.
First, we apply the star-comb lemma in $T_1$ to the endvertices of this matching.
This yields either a star or a comb, and we write $U_1$ for its attachment set.
Then we apply the star-comb lemma in $T_2$ to those vertices that are matched to $U_1$.
Since $G$ is finitely separable, we cannot get two stars.
Like in the first case, we cannot get one star and one comb.
So we must get two combs.
But then $T$ contains two rays that are equivalent in $G$, contradicting that $T$ reflects some set of ends of $G$.

(ii)  By (i) it suffice to show that $G$ has a spanning tree that reflects its undominated ends.
Bruhn and Diestel~\cite[Theorem~6.3]{duality} showed that $G$ has a spanning tree $T$ whose closure in~$\tilde{G}$ does not contain a circle (using their terminology).
We claim that $T$ reflects the undominated ends of~$G$.
For this, we show that 
\begin{enumerate}[label=(\arabic*)]
    \item no ray in $T$ is dominated in~$G$, and that
    \item no two disjoint rays in $T$ are equivalent in~$G$.
\end{enumerate}
Indeed, if $T$ contains a ray that is dominated in $G$ by a vertex~$v$, then that ray is a tail of ray $R\subset T$ that starts in~$v$, so $\overline{R}\subset\overline{T}$ is a circle contradicting the choice of~$T$.
And if $T$ contains two disjoint equivalent rays, then there is a double ray $D\subset T$ that contains both rays, and neither of the two rays is dominated by~(1).
Thus, $\overline{D}\subset\overline{T}$ is a circle contradicting the choice of~$T$.
\end{proof}

\newpage
\section{Duality theorems for undominated combs}\label{section: full duality theorems}

\noindent In this section we prove our two duality theorems for undominated combs in full generality. 
The first theorem is phrased in terms of star-decompositions:
\begin{mainresult}\label{thm: undominated comb johannes ii}
\TFAD
\begin{enumerate}
    \item $G$ contains an undominated comb \at $U$;
    \item $G$ has a star-decomposition with finite separators such that $U$ is contained in the central part and all undominated ends of $G$ live in the leaves' parts.
\end{enumerate}
Moreover, we may assume that the separators of the tree-decomposition in~\emph{(ii)} are connected.
\end{mainresult}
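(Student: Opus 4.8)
The plan is to deduce Theorem~\ref{thm: undominated comb johannes ii} from the already-established rayless-tree machinery together with Carmesin's tree-decomposition displaying the undominated ends. The implication $(\text{ii})\to(\text{i})$ should be the easy direction: if such a star-decomposition exists, then by the star-comb lemma any comb attached to $U$ has its spine in some end $\omega$; since $U$ sits in the central part and the adhesion sets are finite, a comb attached to $U$ whose spine is undominated would force $\omega$ to live in a leaf part, yet the spine meets $U$ infinitely often (being a comb \emph{at} $U$), contradicting that $U$ lies entirely in the central part. Hence every comb at $U$ is dominated, i.e.\ $\neg(\text{i})$.

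For $(\text{i})\to(\text{ii})$, equivalently $\neg(\text{i})\to(\text{ii})$, I would start from Theorem~\ref{thm: tdc displaying undom ends}: $G$ has a rooted tree-decomposition $(T,\cV)$ with upwards-disjoint finite connected separators displaying the undominated ends of $G$. The idea is to collapse this into a star-decomposition by contracting the subtree of $T$ of nodes whose parts are ``near $U$'' into a single central node, while keeping each branch hanging off it as a leaf. Concretely, let $T'$ be the down-closure in $T$ of the set of nodes whose parts meet $U$ together with enough of $T$ to stay connected; form the central part $V_c$ as the union of the parts $V_t$ with $t\in T'$, and for each component of $T-T'$ (equivalently each edge of $T$ leaving $T'$) take the union of the parts in that component, together with the relevant separator, as a leaf part. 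Since the separators of $(T,\cV)$ are finite and connected, the resulting star-decomposition has finite connected adhesion sets, which also settles the ``moreover'' clause. One must check $U\subseteq V_c$: this holds by choice of $T'$. One must check that every undominated end lives in a leaf part: an undominated end $\omega$ corresponds under the display map to an end of $T$, hence to a ray of $T$; because $\neg(\text{i})$ holds, no comb is attached to $U$ with spine in $\omega$, so $\omega\notin\Abs U$, which by Theorem~\ref{thm: tdc displaying undom ends in cl of U}-style reasoning (or by choosing $(T,\cV)$ as in Theorem~\ref{thm: tdc displaying undom ends in cl of U} to display exactly the undominated ends in $\Abs U$ and then cofinally cover $U$) means the ray of $T$ representing $\omega$ eventually leaves $T'$, so $\omega$ lives in a leaf part.

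The cleanest route is probably to invoke Theorem~\ref{thm: tdc displaying undom ends in cl of U} directly: take a rooted tree-decomposition $(T,\cV)$ with pairwise disjoint finite connected separators that displays the undominated ends of $G$ \emph{in the closure of $U$} and covers $U$ cofinally. Then let $T'$ be the down-closure of the nodes whose parts meet $U$. Contracting $T'$ to a point $c$ and leaving the components of $T-T'$ as leaves gives the star-decomposition; $U\subseteq V_c$ because $(T,\cV)$ covers $U$ cofinally and $T'$ contains all nodes whose parts meet $U$; the undominated ends \emph{in} $\Abs U$ are displayed, hence correspond to ends of $T$, and since under $\neg(\text{i})$ there are no undominated ends in $\Abs U$ at all (by the star-comb lemma an undominated comb at $U$ exists iff some end of $\Abs U$ is undominated), every undominated end of $G$ lies outside $\Abs U$ and therefore is sent by the display map to a node of $T$ or to an end of $T$ living below $T-T'$; in either case it ends up in a leaf part.

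\textbf{Main obstacle.} The delicate point is making the contraction genuinely yield a \emph{star}-decomposition rather than merely a tree-decomposition of small radius: one must verify that after collapsing $T'$ the adhesion sets at $c$ are still finite and connected (they are the original separators along edges leaving $T'$, which are pairwise disjoint, hence this is fine) and, more importantly, that every undominated end of $G$ — not just those in $\Abs U$ — lands in a leaf part and not at the central node $c$. An undominated end $\omega$ could a priori live at a node inside $T'$. Ruling this out requires that $\omega$, being undominated, is displayed by $(T,\cV)$; but Theorem~\ref{thm: tdc displaying undom ends in cl of U} only guarantees display of the undominated ends \emph{in $\Abs U$}. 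So I would instead take $(T,\cV)$ from Theorem~\ref{thm: tdc displaying undom ends} (displaying \emph{all} undominated ends) and separately arrange, by pruning $T$ to the down-closure of the $U$-meeting nodes as in the proof of Theorem~\ref{thm: tdc displaying undom ends in cl of U}, that $U$ is covered cofinally while all of $T$'s ends still display undominated ends of $G$; the undominated ends then all correspond to ends of $T$, each such end-ray eventually leaves any fixed finite initial segment, and with $T'$ chosen as the down-closure of the $U$-meeting nodes, every undominated-end-ray leaves $T'$ (otherwise its end would lie in $\Abs U$ and produce an undominated comb at $U$, contradicting $\neg(\text{i})$), so it lives in a leaf part. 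Balancing ``cover $U$ cofinally'' against ``$T'$ is the right subtree to contract'' is the one spot that needs care.
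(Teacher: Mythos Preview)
Your proposal is correct and ultimately lands on the same construction as the paper: take Carmesin's tree-decomposition $(T,\cV)$ from Theorem~\ref{thm: tdc displaying undom ends} displaying \emph{all} undominated ends, let $T'$ be the down-closure in $T$ of the nodes whose parts meet~$U$, and contract $T'$ and each component of $T-T'$ to obtain the star. The paper's write-up is just more direct: it never detours through Theorem~\ref{thm: tdc displaying undom ends in cl of U}, and it phrases the key point simply as ``$T'$ is rayless'' (because any ray of $T'$ is a ray of $T$, hence---by the display bijection---corresponds to an undominated end of $G$, which would then lie in $\Abs U$, contradicting~$\neg$(i)). Your phrasing ``every undominated-end-ray leaves $T'$'' is equivalent once you note that \emph{every} end of $T$ arises from an undominated end of $G$; making that explicit removes the residual worry in your last sentence, and the ``cover $U$ cofinally'' balancing act is then a red herring.
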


\begin{proof}
Clearly, at most one of (i) and (ii) can hold.

To establish that at least one of (i) and (ii) holds, we show $\neg$(i)$\to$(ii). 
By Theorem~\ref{thm: tdc displaying undom ends} we find a rooted tree-decomposition $(T,\cV)$ of $G$ with upwards disjoint finite connected separators that displays the undominated ends of~$G$.
We let $W\subset V(T)$ consist of those nodes $t\in T$ whose parts $V_t$ meet~$U$.
Then we root $T$ arbitrarily and let $T'$ be the subtree $\dc{W}$ of $T$.
Since $U$ does not have any undominated end of $G$ in its closure, it follows that $T'$ must be rayless.
We obtain the star $S$ from $T$ by contracting $T'$ and all of the components of $T-T'$.
Then we let $(T,\alpha)$ be the $\Sinf$-tree corresponding to $(T,\cV)$, so $(S,\,\alpha\rest \vE(S)\,)$ is an $\Sinf$-tree that induces the desired star-decomposition which even satisfies the `moreover' part.
\end{proof}
The central part of the star-decomposition in Theorem~\ref{thm: undominated comb johannes ii}~(ii) induces a subgraph of $G$ that seems to carry the information that there is no undominated comb \at $U$. Our second duality theorem for undominated combs confirms this suspicion:  
\begin{mainresult}\label{thm: undominated comb johannes}
\TFAD
\begin{enumerate}
    \item $G$ contains an undominated comb \at $U$;
    \item $G$ has a connected subgraph that contains $U$ and all whose rays are dominated in~it.
\end{enumerate}
\end{mainresult}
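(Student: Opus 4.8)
The plan is to prove the equivalence $\neg(i)\Leftrightarrow(ii)$, which simultaneously shows that the two assertions are mutually exclusive and that at least one of them holds. \emph{From $(i)$ to $\neg(ii)$.} Suppose $H\subseteq G$ is connected, contains $U$, and has all its rays dominated in $H$, and assume for contradiction that $G$ contains an undominated comb $C$ \at $U$, with spine $R$ and (infinite) set of teeth $U'\subseteq U$. Applying the star-comb lemma in $H$ to $U'\subseteq V(H)$ yields a star or a comb in $H$ \at an infinite subset $U''\subseteq U'$. If it is a star $B$, then all of its infinitely many leaves are teeth of $C$, so $B\subseteq H\subseteq G$ dominates $C$; since a comb in $G$ is dominated if and only if its spine is, $R$ is dominated in $G$ -- a contradiction. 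If it is a comb $D$ with spine $R_D$, then $R_D\subseteq H$ is dominated in $H$, hence in $G$; since the vertices of $U''$ are teeth of both $C$ and $D$, concatenating the attached paths of $C$ and $D$ at the vertices of $U''$ and thinning out greedily produces infinitely many pairwise disjoint $R$--$R_D$ paths, so $R$ and $R_D$ lie in a common end of $G$ and $R$ is again dominated in $G$ -- a contradiction.

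\emph{From $\neg(i)$ to $(ii)$.} Since $G$ contains no undominated comb \at $U$, by the star-comb lemma no undominated end of $G$ lies in the closure of $U$. Hence Theorem~\ref{thm: undominated comb johannes ii}, in the direction $\neg(i)\Rightarrow(ii)$ and including its `moreover' clause, provides a star-decomposition $(S,\mathcal{V})$ of $G$ with finite \emph{connected} separators whose central part $W$ (the part at the centre of the star $S$) contains $U$ and such that every undominated end of $G$ lives in the part of some leaf. Put $H:=G[W]$. Then $U\subseteq V(H)$, and $H$ is connected: any detour of a $W$--$W$ path of $G$ through a leaf part $V_t\setminus S_t$ enters and leaves through the finite \emph{connected} separator $S_t\subseteq W$ and can therefore be rerouted inside $G[S_t]$. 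So $H$ is a connected subgraph of $G$ that contains $U$, and $(ii)$ will follow once we show that all rays of $H$ are dominated in $H$.

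This last point is the heart of the argument. Let $R$ be a ray of $H$, with $G$-end $\omega$. The end $\omega$ cannot live in a leaf's part, for $R\subseteq W$ meets each leaf part $V_t$ only inside the finite set $S_t$, so no tail of $R$ lies in $V_t\setminus S_t$; thus $\omega$ lives at the centre of $S$ and, by the choice of $(S,\mathcal{V})$, is dominated in $G$. Fix a vertex $v$ dominating $R$ in $G$ and an infinite $v$--$(R-v)$ fan $\mathcal{P}$ in $G$. First $v\in W$: otherwise $v$ lies in some $V_t\setminus S_t$, and since a tail of $R$ avoids $V_t$, every path of $\mathcal{P}$ must cross the finite set $S_t$, bounding $\mathcal{P}$. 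Now the fan is rerouted into $H$: each path of $\mathcal{P}$ is finite and so detours into only finitely many leaf parts, and for each leaf $t$ only finitely many paths of $\mathcal{P}$ meet $S_t$ (they are disjoint off $v$). Using this one passes to an infinite subfamily of $\mathcal{P}$ whose members detour through pairwise disjoint families of separators, and reroutes each detour inside the relevant connected $G[S_t]$; with the appropriate book-keeping about how separators of different leaves can overlap, the rerouted paths form an infinite $v$--$R$ fan inside $H$, so $R$ is dominated in $H$.

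The main obstacle is precisely this final rerouting: turning a fan that witnesses domination in $G$ into one living inside $G[W]$. This is where the connectedness of the separators -- the `moreover' part of Theorem~\ref{thm: undominated comb johannes ii} -- is indispensable, since without it the central part need not even be connected and a dominating fan could leak irretrievably into the leaf parts; the delicate part of carrying this out is keeping the rerouted paths pairwise disjoint when separators of distinct leaves meet.
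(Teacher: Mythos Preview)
Your proof follows essentially the same route as the paper's: for the incompatibility you apply the star--comb lemma inside $H$ (handling the star and comb cases separately, where the paper is terser), and for $\neg(i)\to(ii)$ you invoke Theorem~\ref{thm: undominated comb johannes ii} with its `moreover' clause, take $H=G[V_c]$, and argue that every ray of $H$ is dominated in $H$ by rerouting a $G$-fan through the connected separators. This is exactly the paper's argument; the paper's entire justification for the last step is the single phrase ``can be greedily turned into an infinite $v$--$(R-v)$ fan in $H$''.

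Your closing worry about separators of distinct leaves overlapping is a fair reading of what ``greedily'' has to accomplish, but you may be making it harder than necessary by trying to reroute the whole $G$-fan at once. The cleaner greedy is to build the $H$-fan one path at a time: having constructed $Q_1,\dots,Q_n$ with union $F\cup\{v\}$, one only needs a $v$--$R$ path in $H-F$. If no such path existed, $F$ would separate $v$ from a tail of $R$ in $H$; but any $v$--$R$ path in $G-F$ (which exists since $v$ dominates $R$ in $G$) whose excursions into leaf parts $V_t\setminus V_c$ are replaced by paths inside the connected $G[S_t]$ yields a walk in $H$ between $v$ and $R$, and one then argues this cannot always be blocked by $F$. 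This is the content of the paper's ``greedily'', and your proposal is at the same level of detail.
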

\begin{proof}
To see that at most one of (i) and (ii) holds, consider any connected subgraph $H\subset G$ containing $U$ such that every ray of $H$ is dominated in $H$.
We show that $H$ obstructs the existence of an undominated comb in $G$ \at $U$.
Assume for a contradiction that such a comb exists. Then the undominated end $\omega\in\Omega(G)$ of that comb's spine lies in the closure of $U$, and so applying the star-comb lemma in $H$ to the attachment set $U'\subset U$ of that comb must yield another comb \at $U'$.
But this latter comb is dominated in $H$ by assumption, and at the same time its spine is equivalent in $G$ to the first comb's spine, contradicting that $\omega$ is undominated in $G$. 

To establish that at least one of (i) and (ii) holds, we show $\neg$(i)$\to$(ii). 
Let $(T,\cV)$ be the star-decomposition from Theorem~\ref{thm: undominated comb johannes ii}~(ii) also satisfying the `moreover' part of the theorem. We claim that the graph $H=G[V_c]$ that is induced by the central part $V_c$ of $(T,\cV)$ is as desired. 
Clearly, $H$ contains $U$. And $H$ is connected because the separators of $(T,\cV)$ are connected. Now if $R$ is any ray in $H$, it is dominated in $G$ by some vertex $v\in V_c$. This vertex $v$ also dominates 
$R$ in $H$ because every infinite $v$--$(R-v)$ fan in $G$ can be greedily turned into an infinite $v$--$(R-v)$ fan in $H$ by employing the connectedness of the finite separators of the star-decomposition.
\end{proof}

\bibliographystyle{amsplain}
\bibliography{StarCombBib}

\providecommand{\bysame}{\leavevmode\hbox to3em{\hrulefill}\thinspace}
\providecommand{\MR}{\relax\ifhmode\unskip\space\fi MR }
\providecommand{\MRhref}[2]{%
  \href{http://www.ams.org/mathscinet-getitem?mr=#1}{#2}
}
\providecommand{\href}[2]{#2}
\begin{thebibliography}{10}

\bibitem{duality}
H.~Bruhn and R.~Diestel, \emph{Duality in infinite graphs}, Comb.,\ Probab. \&
  Comput. \textbf{15} (2006), 75--90.

\bibitem{StarComb1StarsAndCombs}
C.~Bürger and J.~Kurkofka, \emph{{Duality theorems for stars and combs I:
  Arbitrary stars and combs}}, 2020,
  \href{https://arxiv.org/abs/2004.00594}{arXiv:2004.00594}.

\bibitem{StarComb2TheDominatedComb}
\bysame, \emph{{Duality theorems for stars and combs II: Dominating stars and
  dominated combs}}, 2020,
  \href{https://arxiv.org/abs/2004.00593}{arXiv:2004.00593}.

\bibitem{StarComb4TheUndominatingStar}
\bysame, \emph{{Duality theorems for stars and combs IV: Undominating stars}},
  2020, \href{https://arxiv.org/abs/2004.00591}{arXiv:2004.00591}.

\bibitem{carmesin2014all}
J.~Carmesin, \emph{All graphs have tree-decompositions displaying their
  topological ends}, Combinatorica \textbf{39} (2019), no.~3, 545--596.

\bibitem{DiestelBook5}
R.~Diestel, \emph{{Graph Theory}}, 5th ed., Springer, 2016.

\bibitem{DiestelLeaderNST}
R.~Diestel and I.~Leader, \emph{Normal spanning trees, {A}ronszajn trees and
  excluded minors}, J.~London Math.\ Soc. \textbf{63} (2001), 16--32.

\bibitem{halin64}
R.~Halin, \emph{{Über unendliche Wege in Graphen}}, Math.\ Annalen
  \textbf{157} (1964), 125--137.

\bibitem{halin2000}
\bysame, \emph{Miscellaneous problems on infinite graphs}, J.~Graph Theory
  \textbf{35} (2000), 128--151.

\bibitem{jung69}
H.A. Jung, \emph{{Wurzelb{\"a}ume und unendliche Wege in Graphen}}, Math.\
  Nachr. \textbf{41} (1969), 1--22.

\bibitem{KomjathEndfaithfulMartin}
P.~Komjáth, \emph{Martin's axiom and spanning trees of infinite graphs},
  J.~Combin.\ Theory (Series B) \textbf{56} (1992), no.~1, 141--144.

\bibitem{PitzNewNSTobstructions}
M.~Pitz, \emph{A new obstruction for normal spanning trees}, 2020,
  \href{https://arxiv.org/abs/2005.04150}{arXiv:2005.04150}.

\bibitem{polatFrenchIII}
N.~Polat, \emph{{Développments terminaux des graphes infinis III. Arbres
  maximaux sans rayon, cardinalité maximum des ensembles disjoints de
  rayons}}, Math.\ Nachr. \textbf{115} (1984), no.~1, 337--352.

\bibitem{Polat90}
\bysame, \emph{Topological aspects of infinite graphs}, Cycles and rays: basic
  structures in finite and infinite graphs, Proc. NATO Adv. Res. Workshop,
  Montreal/Can. 1987, NATO ASI Ser. (1990), 197--220.

\bibitem{Polat1997}
\bysame, \emph{{End-Faithful Spanning Trees in $T_{\aleph_1}$-Free Graphs}},
  J.~Graph Theory \textbf{26} (1997), no.~4, 175--181.

\bibitem{endfaithfulCounterexample}
P.~Seymour and R.~Thomas, \emph{An end-faithful spanning tree counterexample},
  Disc.\ Math. \textbf{95} (1991), no.~1, 321--330.

\bibitem{ThomassenEndfaithfulCounterexample}
C.~Thomassen, \emph{Infinite connected graphs with no end-preserving spanning
  trees}, J.~Combin.\ Theory (Series B) \textbf{54} (1992), no.~2, 322--324.

\bibitem{siran}
J.~Širáň, \emph{End-faithful forests and spanning trees in infinite graphs},
  Disc.\ Math. \textbf{95} (1991), no.~1, 331--340.

\end{thebibliography}
\end{document}